\newcommand\argmin{\mathop{\textrm{argmin}}}
\newcommand\crule[1][5cm]{%
  \par
  \nointerlineskip
  \centerline{\hbox to #1{\hrulefill}}%
  \nointerlineskip}
\numberwithin{equation}{section}
\numberwithin{algorithm}{section}
\newtheorem{thm}{{\sc Theorem}}[section]
\newtheorem{lem}{{\sc Lemma}}[section]
\newtheorem{cor}[thm]{Corollary}
\newtheorem{rem}{Remark}[section]
\newtheorem{prop}{Proposition}[section]
\newtheorem{defi}{{\sc Definition}}[section]
\newcommand{\R}{\mathbb{R}}
\newcommand{\E}{\mathbb{E}}
\newcommand{\SFO}{\mathcal{SFO}}
\newcommand{\tr}{^{\sf T}}
\newcommand{\C}[1]{{\cal {#1}}}
\newcommand{\be}{\begin{equation}}
\newcommand{\ee}{\end{equation}}
\newcommand{\etal}{{\it et al.\ }}
\title{Stochastic Quasi-Newton Methods for \\ Nonconvex Stochastic Optimization}
\author{Xiao Wang
\thanks{School of Mathematical Sciences, University of Chinese Academy of Sciences, China. Email: wangxiao@ucas.ac.cn. Research of this author was supported in part by Postdoc Grant 119103S175, UCAS President Grant Y35101AY00 and NSFC Grant 11301505.}
\and Shiqian Ma
\thanks{Corresponding author. Department of Systems Engineering and Engineering Management, The Chinese University of Hong Kong, Shatin, N. T., Hong Kong, China. Email: sqma@se.cuhk.edu.hk. Research of this author was supported in part by the Hong Kong Research Grants Council General Research Fund (Grant 14205314).}
\and Wei Liu
\thanks{IBM T. J. Watson Research Center, Yorktown Heights, New York, USA. Email: weiliu@us.ibm.com}
}
\begin{document}

\maketitle

\begin{center} December 1, 2014 \end{center}

\begin{abstract}
In this paper we study stochastic quasi-Newton methods for nonconvex stochastic optimization, where we assume that only stochastic information of the gradients of the objective function is available via a stochastic first-order oracle ($\SFO$). Firstly, we propose a general framework of stochastic quasi-Newton methods for solving nonconvex stochastic optimization. The proposed framework extends the classic quasi-Newton methods working in deterministic settings to stochastic settings, and we prove its almost sure convergence to stationary points. Secondly, we propose a general framework for a class of randomized stochastic quasi-Newton methods, in which the number of iterations conducted by the algorithm is a random variable. The worst-case $\SFO$-calls complexities of this class of methods are analyzed. Thirdly, we present two specific methods that fall into this framework, namely stochastic damped-BFGS method and stochastic cyclic Barzilai-Borwein method. Finally, we report numerical results to demonstrate the efficiency of the proposed methods.

\vspace{0.8cm}

\noindent {\bf Keywords:} Stochastic Optimization, Nonconvex Optimization, Stochastic Approximation, Quasi-Newton Method, BFGS Method, Barzilai-Borwein Method, Complexity

\vspace{0.5cm}

\noindent {\bf Mathematics Subject Classification 2010:} 90C15; 90C30; 62L20; 90C60

\end{abstract}



\section{Introduction}\label{sec:intro}

In this paper, we consider the following stochastic optimization problem:
\begin{equation}\label{orig-prob}
\min_{x\in\R^n}\quad  f(x)
\end{equation}
where $f:\R^n \to \R$ is continuously differentiable and possibly nonconvex. We assume that the exact information of function values and gradients of $f$ are not available and only noisy gradients of $f$ can be obtained via subsequent calls to a {\it stochastic first-order oracle} ($\SFO$). Problem \eqref{orig-prob} arises in many applications, including machine learning \cite{mbps09}, simulation-based optimization \cite{f02}, and mixed logit modeling problems in economics and transportation \cite{bbt00,bct06,hg03}. In these applications,  the objective function is sometimes given in the form of an expectation of certain function with a random variable being a parameter:
\[
f(x)=\E[F(x,\xi)],  \quad \mbox{or} \quad f(x) = \int_{\Xi} F(x,\xi)dP(\xi),
\]
where $\xi$ denotes a random variable and its distribution $P$ is supported on $\Xi$. Since in many cases either the integral is difficult to evaluate or function $F(\cdot,\xi)$ is not given explicitly, the function values and gradients of $f$ cannot be easily obtained and only noisy gradient information of $f$ is available.

The idea of employing stochastic approximation (SA) to solve stochastic programming problems can be traced back to the seminal work by Robbins and Monro \cite{rm51}. The classical SA method mimics the steepest gradient descent method using a stochastic gradient, i.e., it updates the iterates via
\[
x_{k+1} = x_k - \alpha_k G_k,
\]
where $G_k$ is an unbiased estimate of the gradient of $f$ at $x_k$, and $\alpha_k$ is a stepsize for the stochastic gradient step. In the literature, the SA method is also referred to as stochastic gradient descent (SGD) method.
The SA method has been further studied extensively in \cite{C54,e83,g78,p90,pj92,rs86,s58}, and the main focus in these papers has been the convergence of SA in different settings. Recently, there have been lots of interests in analyzing
the worst-case complexity of SA methods. Works along this direction were mainly ignited by the complexity theory developed by Nesterov for first-order methods engaging in solving convex optimization problems \cite{ny83,NesterovConvexBook2004}.
Nemirovski \etal \cite{njls09} proposed a mirror descent SA method for solving nonsmooth convex stochastic programming problem $x^*:=\argmin \{ f(x)\mid x\in X\}$ and analyzed its worst-case iteration complexity, where $f$ is nonsmooth and convex and $X$ is a convex set. Specifically, it was shown in \cite{njls09} that for any given $\epsilon>0$, the proposed mirror descent SA method needs $O(\epsilon^{-2})$ iterations to obtain an $\bar{x}$ such that $\E[f(\bar{x})-f(x^*)]\le\epsilon$,
where $\E[y]$ denotes the expectation of the random variable $y$. Other SA methods with provable complexity analysis for solving convex stochastic optimization problems have also been studied in \cite{gl12,jntv05,jrt08,l12,lns12}.

It is noted that the SA methods mentioned above all concentrated on convex stochastic optimization problems. Recently there have been lots of interests on SA methods for nonconvex stochastic optimization problems \eqref{orig-prob} in which $f$ is a nonconvex function. Ghadimi and Lan \cite{gl13} proposed a randomized stochastic gradient (RSG) method for nonconvex stochastic optimization \eqref{orig-prob}. RSG returns an iterate from a randomly chosen iteration as an approximate solution. It is shown in \cite{gl13} that to return an $\epsilon$-solution $\bar{x}$, i.e., $\E[\|\nabla f(\bar{x})\|^2]\le\epsilon$, the total number of $\SFO$-calls needed by RSG is in the order of $O(\epsilon^{-2})$. Ghadimi and Lan \cite{gl132} also studied an accelerated stochastic SA method for solving stochastic optimization problems \eqref{orig-prob} based on Nesterov's accelerated gradient method, which improved the complexity for convex cases from $O(\epsilon^{-2})$ to $O(\epsilon^{-4/3})$. A class of nonconvex stochastic optimization problems, in which the objective function is a composition of a nonconvex function $f$ and a convex nonsmooth function $g$, i.e., $x^*:=\argmin\{f(x)+g(x):x\in\R^n\}$,
was considered by Ghadimi \etal in \cite{glz13}, and a mini-batch SA method was proposed and its worst-case $\SFO$-calls complexity was analyzed. In \cite{dl13}, a stochastic block mirror descent method, which incorporates the block-coordinate decomposition scheme into  stochastic mirror-descent methodology, was proposed for a nonconvex stochastic optimization problem $x^*=\argmin\{f(x):x\in X\}$ with $X$ having a block structure. More recently, Wang \etal \cite{WangMaYuan13} proposed a penalty method for nonconvex stochastic optimization problems with nonlinear constraints, and also analyzed its $\SFO$-calls complexity.

The aforementioned methods are all first-order methods in the sense that they only use (stochastic) first-order derivative information of the objective function. In this paper, we consider methods for solving \eqref{orig-prob} that employ certain approximate second-order derivative information of the objective function. Since approximate second-order information is used, this kind of methods are expected to take less number of iterations to converge, with the price that the per-iteration computational effort is possibly increased. Along this line, there have been some works in designing stochastic quasi-Newton methods for unconstrained stochastic optimization problems.
Methods of this type usually employ the following updates
\be\label{second-order-update}
x_{k+1} = x_k - \alpha_k B_k^{-1} G_k, \quad \mbox{or}\quad x_{k+1} = x_k - \alpha_k H_k G_k,
\ee
where $B_k$ (resp. $H_k$) is a positive definite matrix that approximates the Hessian matrix (resp. inverse of the Hessian matrix) of $f(x)$ at $x_k$.
Some representative works in this class of methods are discussed in the following. Among the various SGD methods, the adaptive subgradient (AdaGrad) proposed in \cite{DHS2011} has been proven to be quite efficient in practice. AdaGrad takes the form of \eqref{second-order-update} with $B_k$ being a diagonal matrix that estimates the diagonal of the squared root of the uncentered covariance matrix of the gradients. \cite{BBG-09} also studied the method using SGD with a diagonal rescaling matrix based on the secant condition associated with quasi-Newton methods. In addition, it was shown in \cite{BBG-09} that if $B_k$ is chosen as the exact Hessian at the optimal solution $x^*$, the number of iterations needed to achieve an $\epsilon$-solution $\bar{x}$, i.e., $\E[f(\bar{x})-f(x^*)]\le\epsilon$, is in the order of $O(\epsilon^{-1})$. However, the optimal solution of the problem usually cannot be obtained beforehand, so the exact Hessian information remains unknown. \cite{RF-2010} discussed the necessity of including both Hessian and covariance matrix information in a (stochastic) Newton type method.
The quasi-Newton method proposed in \cite{BCNN} uses some subsampled Hessian algorithms via the sample average approximation (SAA) approach to estimate Hessian-vector multiplications.
In \cite{BHNS-14}, the authors proposed to use the SA approach instead of SAA to estimate the curvature information. This stochastic quasi-Newton method is based on L-BFGS \cite{Liu-Nocedal-89} and performs very well in some problems arising from machine learning, but no theoretical convergence analysis was provided in \cite{BHNS-14}. Stochastic quasi-Newton methods based on BFGS and L-BFGS updates were also studied for online convex optimization in Schraudolph \etal \cite{SchYuGue07}, with no convergence analysis provided, either. Mokhtari and Ribeiro \cite{mr10} propose a regularized stochastic BFGS method (RES) for solving \eqref{orig-prob} with $f$ being strongly convex, and proved its almost sure convergence. Recently, Mokhtari and Ribeiro \cite{mr14-2} proposed an online L-BFGS method that is suitable for strongly convex stochastic optimization problems arising in the regime of large scale machine learning, and analyzed its global convergence. It should be noted that all the aforementioned methods based on stochastic quasi-Newton information mainly focused on solving convex or strongly convex stochastic optimization problems.

As discovered by several groups of researchers \cite{BBG-09,mr10,SchYuGue07}, when solving convex stochastic optimization problems, stochastic quasi-Newton methods may result in nearly singular Hessian approximations $B_k$ due to the presence of stochastic information. \cite{mr10} proposed a regularized BFGS update strategy which can preserve the positive-definiteness of $B_k$. However, for nonconvex optimization problems, preserving the positive-definiteness of $B_k$ is difficult even in deterministic settings. In classic quasi-Newton methods for nonconvex deterministic optimization, line search techniques are usually incorporated to guarantee the positive-definiteness of $B_k$. However, performing the line search techniques in stochastic optimization is no longer practical, because the exact function values are not available.
Therefore, a crucial issue in applying quasi-Newton methods to solve nonconvex stochastic optimization \eqref{orig-prob} is how to keep the positive-definiteness of the updates $B_k$ without using the line search techniques. In this paper, we will discuss and address this issue. Our contributions in this paper are as follows.
\begin{itemize}
\item[1.] We propose a general framework of stochastic quasi-Newton methods for solving nonconvex stochastic optimization problem \eqref{orig-prob}. In addition, we analyze its almost sure convergence to the stationary point of \eqref{orig-prob}.
\item[2.] We propose a general framework of randomized stochastic quasi-Newton methods for solving \eqref{orig-prob}. In this kind of methods, the methods return an iterate from a randomly chosen iteration. We analyze their worst-case $\SFO$-calls complexity to find an $\epsilon$-solution $\bar{x}$, i.e., $\E[\|\nabla f(\bar{x})\|^2]\le \epsilon$.
\item[3.] We propose two concrete stochastic quasi-Newton update strategies, namely stochastic damped-BFGS update and stochastic cyclic-BB-like update, to adaptively generate positive definite Hessian approximations. Both strategies fit into the proposed general frameworks, so the established convergence and complexity results apply directly.
\end{itemize}

{\bf Notation.} The gradient of $f(x)$ is denoted as $\nabla f(x)$. The subscript $_k$ refers to the iteration number in an algorithm, e.g., $x_k$ is the $k$-th $x$ iterate. Without specification, $\|x\|$ represents the Euclidean norm of vector $x$. Both $\langle x,y \rangle$ and $x\tr y$ with $x,y\in\R^n$ denote the Euclidean inner product of $x$ and $y$. $\lambda_{\max}(A)$ denotes the largest eigenvalue of a symmetric matrix $A$. $A\succeq B$ with $A,B\in\R^{n\times n}$ means that $A-B$ is positive semidefinite. In addition, $\mod(a,b)$ with two positive integers $a$ and $b$ denotes the modulus of division $a/b$. We also denote by $\mathrm{P}_{\Omega}$ the projection onto a closed convex set $\Omega$.

{\bf Organization.} The rest of this paper is organized as follows. In Section \ref{sec:RSSA}, we present a general framework of stochastic quasi-Newton methods for nonconvex stochastic optimization \eqref{orig-prob} and analyze its convergence in expectation. In Section \ref{sec:rssa}, we present a general framework of randomized stochastic quasi-Newton methods and analyze its worst-case $\SFO$-calls complexity for returning an $\epsilon$-solution. In Section \ref{sec:SDBFGS}, we propose two concrete quasi-Newton update strategies, namely stochastic damped-BFGS update and stochastic cyclic-BB-like update. In Section \ref{sec:num} we report some numerical experimental results. Finally, we draw our conclusions in Section \ref{sec:conclusions}.


\section{A general framework for nonconvex stochastic quasi-Newton methods}\label{sec:RSSA}

In this section we study the stochastic quasi-Newton methods for nonconvex stochastic optimization problem \eqref{orig-prob}.
We assume that only noisy gradient information of $f$ is available via $\SFO$ calls. Namely, for the input $x$, $\SFO$ will output a stochastic gradient $G(x,\xi)$ of $f$, where $\xi$ is a random variable whose distribution is supported on $\Xi\subseteq \R^d$. Here we assume that $\Xi$ does not depend on $x$.

We now give some assumptions required throughout this paper.
\begin{itemize}
\item[\textbf{AS.1}]\quad $f\in\mathcal{C}^1(\R^n)$, i.e., $f: \R^n\to \R$ is continuously differentiable. $f(x)$ is lower bounded by $f^{low}$ for any $x\in\R^n$. $\nabla f$ is globally Lipschitz continuous with Lipschitz constant $L$.
\item[\textbf{AS.2}]\quad For any iteration $k$, we have
\begin{align}\label{c}
  a)\quad & \E_{\xi_k}\left[G(x_k,\xi_k)\right]=\nabla f(x_k),\\
  b)\quad & \E_{\xi_k}\left[\|G(x_k,\xi_k)-\nabla f(x_k)\|^2\right]\le \sigma^2,
\end{align}
where $\sigma>0$ is the noise level of the gradient estimation, and $\xi_k, k=1,\ldots,$ are independent to each other, and they are also assumed to be independent of $x_k$.
\end{itemize}

In SGD methods, iterates are normally updated through
\be \label{sgd}
x_{k+1} = x_k - \alpha_k G(x_k,\xi_k),
\ee
or the following mini-batch version
\be \label{sgd2}
x_{k+1} = x_k -\alpha_k\cdot\frac{1}{m_k}\sum_{i=1}^{m_k} G(x_k,\xi_{k,i}),
\ee
where $m_k\ge 1$ is a positive integer and refers to the {\it batch size} in the $k$-th iteration. For deterministic unconstrained optimization, quasi-Newton methods have been proven to perform better convergence speed than gradient-type methods, because approximate second-order derivative information is employed. In deterministic unconstrained optimization, quasi-Newton methods update the iterates using
\be\label{quasi}
x_{k+1} = x_k - \alpha_k B_k^{-1}\nabla f(x_k),
\ee
where the stepsize $\alpha_k$ is usually determined by line search techniques, and $B_k$ is a positive definite matrix that approximates the Hessian matrix of $f(x)$ at iterate $x_k$. One widely-used updating strategy for $B_k$ is the following BFGS formula \cite{nw06}:   \be\label{bfgs}
\begin{aligned}
(\mathrm{BFGS}): \quad B_{k+1} = B_k + \frac{y_ky_k\tr }{s_k\tr y_k} - \frac{B_ks_ks_k\tr B_k}{s_k\tr B_ks_k},
\end{aligned}
\ee
where $s_k := x_{k+1}-x_k$ and $y_k:=\nabla f(x_{k+1}) - \nabla f(x_k)$. It is known that \eqref{bfgs} preserves the positive-definiteness of sequence $\{B_k\}$.
BFGS method and the limited memory BFGS method \cite{Liu-Nocedal-89} demonstrate faster convergence speed than gradient methods both theoretically and numerically. Interested readers are referred to \cite{nw06} for more details on quasi-Newton methods in deterministic settings.

In the stochastic settings, since the exact gradients of $f$ are not available, the update formula \eqref{bfgs} cannot guarantee that $B_{k+1}$ is positive definite. To overcome this difficulty, Mokhtari and Ribeiro \cite{mr10} proposed the following updating formula which preserves the positive-definiteness of $B_k$:
\be \label{x-k}
x_{k+1} = x_k - \alpha_k(B_k^{-1}+\zeta_k I)\,G_k,
\ee
where $\zeta_k$ is a safeguard parameter such that $B_k^{-1} +\zeta_k I$ is uniformly positive definite for all $k$, and $G_k$ is defined as
\begin{equation}\label{G-k}
    G_k = \frac{1}{m_k}\sum_{i=1}^{m_k}G(x_k,\xi_{k,i}),
 \end{equation}
 where the positive integer $m_k$ denotes the batch size in gradient samplings.
From AS.2 we know that $G_k$ has the following properties:
\be\label{Exp-G-k}
\E[G_k|x_k] = \nabla f(x_k), \quad \E[\|G_k-\nabla f(x_k)\|^2|x_k]\le \frac{\sigma^2}{m_k}.
\ee
We also make the following bound assumption on $B_k$. Note that similar assumption was required in \cite{mr10}.
\begin{itemize}
\item[{\bf AS.3}]\quad There exist two positive scalars $m$ and $M$ such that
\[
m I \preceq B_k^{-1}+\zeta_k I \preceq MI, \quad \mbox{ for any } k,
\]
where $m$ and $M$ are positive scalars.
\end{itemize}
From \eqref{G-k}, it follows that $G_k$ depends on random variables $\xi_{k,1},\ldots,\xi_{k,m_k}$. We denote $\xi_k:=(\xi_{k,1},\ldots,\xi_{k,m_k})$. We use $\xi_{[k]}$ to denote the collection of all the random variables in the first $k$ iterations, i.e., $\xi_{[k]}:=(\xi_1,\ldots,\xi_k)$.
It is easy to see from \eqref{G-k} and \eqref{x-k} that the random variable $x_{k+1}$ depends on $\xi_{[k]}$ only. Since $B_k$ depends on $x_k$, we make the following assumption on $B_k (k\ge 2)$ (note that $B_1$ is pre-given in the initial setting):
\begin{itemize}
\item[{\bf AS.4}]\quad For any $k\ge 2$, the random variable $B_k$ depends only on $\xi_{[k-1]}$.
\end{itemize}

The following equality follows directly from {\bf AS.4} and \eqref{Exp-G-k}:
\[
\E[B_k^{-1}G_k|\xi_{[k-1]}] = B_k^{-1}\nabla f(x_k).
\]
We will see later that this equality plays a key role in analyzing our stochastic quasi-Newton methods. Moreover, both assumptions {\bf AS.3} and {\bf AS.4} can be realized and we will propose two specific updating schemes of $B_k$ that satisfy {\bf AS.3-4} in Section \ref{sec:SDBFGS}.

We now present a general framework of stochastic quasi-Newton methods (SQN) for solving \eqref{orig-prob} in Algorithm \ref{sso-uncons}.

\begin{algorithm}[ht]
\caption{{\bf SQN: Stochastic quasi-Newton method for nonconvex stochastic optimization \eqref{orig-prob}}}
\label{sso-uncons}
\begin{algorithmic}[1]
\REQUIRE {Given $x_1\in\R^n$, a positive definite matrix $B_1\in\R^{n\times n}$, batch sizes $\{m_k\}_{k\ge 1}$, safeguard parameters $\{\zeta_k\}_{k\ge1}$ and stepsizes $\{\alpha_k\}_{k\ge 1}$ satisfying
    \be\label{alpha-inf}
        \sum_{i=0}^{+\infty}\alpha_i = +\infty, \qquad \sum_{i=0}^{+\infty}\alpha_i^2 < +\infty.
    \ee
}
\FOR {$k=1,2,\ldots$}
    \STATE    Calculate $G_k$ through \eqref{G-k}, i.e.,
        \begin{equation*}
            G_k = \frac{1}{m_k}\sum_{i=1}^{m_k}G(x_k,\xi_{k,i}).
        \end{equation*}
    \STATE Calculate $x_{k+1}$ through \eqref{x-k}, i.e.,
        \[
        x_{k+1} = x_k - \alpha_k(B_k^{-1}+\zeta_k I)\,G_k.
        \]
    \STATE Generate $B_{k+1}$ that satisfies assumptions {\bf AS.3} and {\bf AS.4}.
\ENDFOR
\end{algorithmic}
\end{algorithm}

We now analyze the convergence of Algorithm \ref{sso-uncons}. Note that if the sequence of iterates $\{x_k\}$ generated by Algorithm \ref{sso-uncons} lies in a compact set, then it follows from {\bf AS.1} that 
$\{\nabla f(x_k)\}$ is bounded. Then there exists $\bar{M}>0$ such that
\be\label{bound}
\|\nabla f(x_k)\|\le \bar{M}.
\ee

The following lemma provides a descent property of the objective value of Algorithm \ref{sso-uncons}.
\begin{lem}\label{lem3.1}
Assume that $\{x_k\}$ is generated by Algorithm \ref{sso-uncons} and \eqref{bound} and assumptions {\bf AS.1-4} hold. Then the expectation of function value $f(x_{k+1})$ conditioned on $x_k$ satisfies
\be \label{exp-red}
\E[f(x_{k+1})|x_k] \le f(x_k) - \alpha_k m\|\nabla f(x_k)\|^2 + \frac{1}{2}L\alpha_k^2M^2\left(\bar{M}^2+\frac{\sigma^2}{m_k}\right),
\ee
where the conditioned expectation is taken with respect to $\xi_k$.
\end{lem}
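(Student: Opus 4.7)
The plan is to apply the standard descent-lemma machinery (Lipschitz gradient upper bound) to the quasi-Newton update, then take the conditional expectation with respect to $\xi_k$ and exploit AS.2, AS.3, AS.4 together with the boundedness hypothesis \eqref{bound}.

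First I would invoke the $L$-Lipschitz continuity of $\nabla f$ from AS.1 to write the usual quadratic majorization
\[
f(x_{k+1}) \le f(x_k) + \nabla f(x_k)\tr (x_{k+1}-x_k) + \frac{L}{2}\|x_{k+1}-x_k\|^2,
\]
and substitute the update $x_{k+1}-x_k = -\alpha_k (B_k^{-1}+\zeta_k I) G_k$ from \eqref{x-k}. This produces a linear term $-\alpha_k \nabla f(x_k)\tr (B_k^{-1}+\zeta_k I) G_k$ and a quadratic term $\tfrac{L\alpha_k^2}{2} \|(B_k^{-1}+\zeta_k I) G_k\|^2$.

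Next I would take the conditional expectation given $x_k$ (equivalently, given $\xi_{[k-1]}$). For the linear term, AS.4 ensures that $B_k$ is determined by $\xi_{[k-1]}$, so under the conditioning it behaves as a deterministic matrix and can be pulled out of the expectation; combined with AS.2(a) and \eqref{Exp-G-k}, this gives $\E[(B_k^{-1}+\zeta_k I) G_k \mid x_k] = (B_k^{-1}+\zeta_k I) \nabla f(x_k)$, and AS.3 then yields $\nabla f(x_k)\tr (B_k^{-1}+\zeta_k I) \nabla f(x_k) \ge m \|\nabla f(x_k)\|^2$. For the quadratic term, AS.3 also gives $\lambda_{\max}(B_k^{-1}+\zeta_k I) \le M$, so $\|(B_k^{-1}+\zeta_k I) G_k\|^2 \le M^2 \|G_k\|^2$; decomposing $G_k = \nabla f(x_k) + (G_k - \nabla f(x_k))$ and taking expectation, the cross-term vanishes by AS.2(a), yielding
\[
\E[\|G_k\|^2 \mid x_k] = \|\nabla f(x_k)\|^2 + \E[\|G_k - \nabla f(x_k)\|^2 \mid x_k] \le \bar{M}^2 + \frac{\sigma^2}{m_k},
\]
where the last inequality uses \eqref{bound} together with the variance bound in \eqref{Exp-G-k}. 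Assembling these estimates immediately gives \eqref{exp-red}.

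The only real subtlety, and the one place where one could slip up, is the measurability argument used to factor $B_k$ out of the conditional expectation. AS.4 is designed precisely for this: since $\xi_k$ is independent of $\xi_{[k-1]}$ by AS.2 and $B_k$ is $\xi_{[k-1]}$-measurable by AS.4, the matrix $B_k^{-1}+\zeta_k I$ acts as a constant under the conditioning, and the identity $\E[B_k^{-1} G_k \mid \xi_{[k-1]}] = B_k^{-1} \nabla f(x_k)$ stated right after AS.4 applies verbatim. Once this step is justified, the remainder is routine bookkeeping with the constants $m$, $M$, $L$, $\bar{M}$, and $\sigma$.
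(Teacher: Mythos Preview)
Your proposal is correct and follows essentially the same route as the paper's proof: apply the descent lemma from AS.1, substitute the update \eqref{x-k}, use AS.3 to bound the preconditioner by $m$ from below (linear term) and $M$ from above (quadratic term), invoke AS.4 and unbiasedness to handle the conditional expectation of the linear term, and decompose $\E[\|G_k\|^2\mid x_k]$ via the cross-term vanishing to obtain $\bar{M}^2+\sigma^2/m_k$. The only cosmetic difference is that the paper first does the algebraic split $G_k=\nabla f(x_k)+\delta_k$ before taking the expectation, whereas you take the expectation slightly earlier; the content is identical.
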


\begin{proof}
Using {\bf AS.1}, {\bf AS.3} and \eqref{x-k}, we have
\begin{align}
&\, f(x_{k+1}) \notag \\
 \le & \,f(x_k) + \langle \nabla f(x_k), x_{k+1} - x_k \rangle + \frac{L}{2}\|x_{k+1}-x_{k}\|^2  \notag \\
 = & \,f(x_k) -\alpha_k\langle \nabla f(x_k), (B_k^{-1}+\zeta_k I)G_k \rangle + \frac{L}{2}\alpha_k^2\|(B_k^{-1}+\zeta_k I)G_k\|^2  \notag \\
 \leq &\, f(x_k)  -\alpha_k\langle \nabla f(x_k), (B_k^{-1}+\zeta_k I)\nabla f(x_k)\rangle - \alpha_k\langle \nabla f(x_k), (B_k^{-1}+\zeta_k I)\delta_k\rangle + \frac{L}{2}\alpha_k^2M^2\|G_k\|^2, \label{red-ori}
\end{align}
where $\delta_k = G_k - \nabla f(x_k)$.
Taking expectation on both sides of \eqref{red-ori} conditioned on $x_k$ with respect to $\xi_k$ and noticing that $\E[\delta_k|x_k]=0$,
we obtain from {\bf AS.4} that
\be \label{red}
\E[f(x_{k+1})|x_k]\le  f(x_k) -\alpha_k\langle \nabla f(x_k), (B_k^{-1}+\zeta_k I)\nabla f(x_k) \rangle + \frac{L}{2}\alpha_k^2M^2\E[\|G_k\|^2|x_k].
\ee
From \eqref{Exp-G-k}, \eqref{bound} and $\E[\delta_k|x_k]=0$, we have the following relations:
\begin{align*}
\E[\|G_k\|^2|x_k]& =  \E[\|G_k-\nabla f(x_k) + \nabla f(x_k)\|^2|x_k] \\
                 & =  \E[\|\nabla f(x_k)\|^2|x_k] + \E[\|G_k - \nabla f(x_k)\|^2|x_k] +  2\E[\delta_k,\nabla f(x_k)\rangle |x_k] \\
                 & = \|\nabla f(x_k)\|^2 + \E[\|G_k - \nabla f(x_k)\|^2|x_k]  \\
                 & \le \bar{M}^2 + \frac{\sigma^2}{m_k},
\end{align*}
which together with \eqref{red} and {\bf AS.3} yields \eqref{exp-red}.
\end{proof}

Before proceeding our analysis, we introduce the definition of {\it supermartingale} (see \cite{Durrett-10} for more details).
\begin{defi}\label{def2.1}
Let $\mathcal{F}_k$ be an increasing sequence of $\sigma$-algebra. If $\{X_k\}$ is a stochastic process satisfying
\begin{itemize}
\item[(i)] $\E[|X_k|]<\infty$;
\item[(ii)] $X_k\in\mathcal{F}_k$ for all $k$;
\item[(iii)] $\E[X_{k+1}|\mathcal{F}_k]\le X_k$ for all $k$,
\end{itemize}
then $\{X_k\}$ is said to be a supermartingale.
\end{defi}

The following theorem states the convergence of a nonnegative supermartingale (see, e.g., Theorem 5.2.9 in \cite{Durrett-10}).
\begin{prop}\label{prop2.1}
If $\{X_k\}$ is a nonnegative supermartingale, then $\lim_{k\to\infty}X_k\to X$ almost surely and $\E[X]\le\E[X_0]$.
\end{prop}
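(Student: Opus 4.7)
The plan is to prove Proposition 2.1 via the classical Doob upcrossing argument, which is the standard route to the martingale convergence theorem; nonnegativity plus the supermartingale property will together force $L^1$ boundedness, which is what drives the proof. I will treat $\{X_k\}$ as adapted to some filtration $\{\mathcal{F}_k\}$ (take $\mathcal{F}_k = \sigma(X_0,\dots,X_k)$ if none is given) and exploit the fact that $\E[X_k] \le \E[X_0]$ for every $k$, which follows immediately by induction from condition (iii) of Definition 2.1.

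The core definition is the number of upcrossings of a rational interval $[a,b]$ with $0 \le a < b$. Define stopping times $\tau_1 = \inf\{k : X_k \le a\}$, $\tau_2 = \inf\{k \ge \tau_1 : X_k \ge b\}$, and alternate thereafter, and let $U_n(a,b)$ be the number of completed upcrossings by time $n$. The main step is to establish Doob's upcrossing inequality
\begin{equation*}
(b-a)\,\E[U_n(a,b)] \;\le\; \E[(X_n - a)^-].
\end{equation*}
I would prove this by constructing the predictable process $H_k = \mathbf{1}_{\{\text{some upcrossing is in progress at time }k\}}$, writing the discrete integral $(H \cdot X)_n := \sum_{k=1}^n H_k (X_k - X_{k-1})$, and observing that each completed upcrossing contributes at least $b-a$ to $(H \cdot X)_n$ while any incomplete one contributes at least $-(X_n - a)^-$. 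The supermartingale property combined with $0 \le H_k \le 1$ being predictable gives $\E[(H \cdot X)_n] \le 0$, yielding the inequality after rearrangement.

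From here everything is routine. Since $X_n \ge 0$, we have $(X_n - a)^- \le a$, so $\E[U_n(a,b)] \le a/(b-a)$, uniformly in $n$. Monotone convergence then gives $\E[U_\infty(a,b)] \le a/(b-a) < \infty$, hence $U_\infty(a,b) < \infty$ almost surely. On the event $\{\liminf_k X_k < a < b < \limsup_k X_k\}$ the process would have to upcross $[a,b]$ infinitely often, so this event is null for each such pair. Taking a union over the countable collection of rational pairs $a<b$, I conclude that $\liminf_k X_k = \limsup_k X_k$ almost surely, so the limit $X := \lim_{k\to\infty} X_k$ exists in $[0,\infty]$ almost surely. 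Finally, Fatou's lemma gives
\begin{equation*}
\E[X] \;=\; \E\!\left[\liminf_{k\to\infty} X_k\right] \;\le\; \liminf_{k\to\infty} \E[X_k] \;\le\; \E[X_0],
\end{equation*}
which simultaneously delivers the bound on $\E[X]$ and shows $X < \infty$ almost surely, completing the proof.

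The main obstacle is the upcrossing inequality itself: the bookkeeping of the predictable gambling strategy $H_k$ and the careful separation of completed versus incomplete upcrossings is the only step that is not a one-line consequence of the definitions. Everything before it is set-up, and everything after (nonnegativity, Fatou, countable union over rationals) is mechanical.
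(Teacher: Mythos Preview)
Your proof is correct and is the standard Doob upcrossing argument for the supermartingale convergence theorem. However, the paper does not actually prove this proposition: it is stated without proof and attributed to the literature (``see, e.g., Theorem 5.2.9 in \cite{Durrett-10}''). So there is no paper proof to compare against; you have supplied a full textbook proof where the authors simply quoted a reference.
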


Now we are ready to give the main convergence result of our stochastic quasi-Newton method (Algorithm \ref{sso-uncons}). Its proof essentially follows Theorem 1 in \cite{mr10}, but our assumptions here are relatively weaker.
\begin{thm}\label{thm3.1}
Assume that \eqref{bound} and assumptions {\bf AS.1-4} hold for $\{x_k\}$ generated by Algorithm \ref{sso-uncons} with batch size $m_k = \bar{m}$ for any $k$. Then
\be \label{liminf_g}
\liminf_{k\to\infty}\, \|\nabla f(x_k)\| = 0, \quad \mbox{ with probability 1}.
\ee
\end{thm}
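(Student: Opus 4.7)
The plan is to upgrade the one-step descent inequality of Lemma~\ref{lem3.1} into an almost-sure convergence statement by constructing a nonnegative supermartingale and applying Proposition~\ref{prop2.1}. Since the batch size is fixed at $m_k \equiv \bar{m}$, the noise-induced overhead per step is $\beta_k := \tfrac{1}{2}LM^2\bigl(\bar{M}^2 + \sigma^2/\bar{m}\bigr)\alpha_k^2$, and $\sum_{k\ge 1}\beta_k < \infty$ by \eqref{alpha-inf}. First I would fix the natural filtration $\mathcal{F}_k := \sigma(\xi_{[k-1]})$ (so that $x_k$ and, by AS.4, $B_k$ are $\mathcal{F}_k$-measurable) and rewrite Lemma~\ref{lem3.1} in the conditional form
\[
\E[f(x_{k+1})\mid \mathcal{F}_k] \;\le\; f(x_k) - m\alpha_k \|\nabla f(x_k)\|^2 + \beta_k.
\]

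Next I would define
\[
Y_k \;:=\; f(x_k) - f^{\mathrm{low}} + \sum_{i=k}^{\infty}\beta_i,
\]
which is nonnegative by AS.1 and summability of $\{\beta_i\}$, and $\mathcal{F}_k$-measurable. A one-line computation gives
\[
\E[Y_{k+1}\mid \mathcal{F}_k] \;\le\; Y_k - m\alpha_k\|\nabla f(x_k)\|^2 \;\le\; Y_k,
\]
so $\{Y_k\}$ is a nonnegative supermartingale. Proposition~\ref{prop2.1} then yields an integrable $Y$ with $Y_k \to Y$ almost surely.

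The next step, which I expect to be the delicate one, is to extract the summability $\sum_{k\ge 1}\alpha_k\|\nabla f(x_k)\|^2 < \infty$ almost surely. The nonnegative ``drift'' increments $D_k := Y_k - \E[Y_{k+1}\mid \mathcal{F}_k] \ge m\alpha_k\|\nabla f(x_k)\|^2$ satisfy
\[
\E\Bigl[\sum_{k=1}^{N} D_k\Bigr] \;=\; \E[Y_1] - \E[Y_{N+1}] \;\le\; \E[Y_1] \;<\; \infty
\]
uniformly in $N$; by the monotone convergence theorem the nonnegative series $\sum_k D_k$ is almost surely finite, hence so is $m\sum_k \alpha_k\|\nabla f(x_k)\|^2$. (Equivalently one may invoke the Robbins--Siegmund theorem directly on the supermartingale relation above.)

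Finally, with $\sum_k \alpha_k\|\nabla f(x_k)\|^2 < \infty$ almost surely and $\sum_k \alpha_k = \infty$ from \eqref{alpha-inf}, I would argue by contradiction: if on some event of positive probability we had $\liminf_k \|\nabla f(x_k)\| \ge 2\epsilon > 0$, then for all sufficiently large $k$ we would get $\alpha_k\|\nabla f(x_k)\|^2 \ge \epsilon^2 \alpha_k$, forcing divergence of the tail of $\sum_k \alpha_k\|\nabla f(x_k)\|^2$ on that event, a contradiction. Taking square roots gives \eqref{liminf_g}. The main obstacle throughout is the supermartingale ``drift extraction'' step; the rest is bookkeeping around Lemma~\ref{lem3.1} and the stepsize conditions.
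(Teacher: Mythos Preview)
Your proposal is correct and follows essentially the same route as the paper: the supermartingale $Y_k$ you define coincides with the paper's $\gamma_k - f^{\mathrm{low}}$, the one-step inequality is the paper's \eqref{diff}, and your drift-extraction via telescoping $\E[\sum_k D_k]\le \E[Y_1]$ is exactly the paper's argument that $\E[\sum_k \beta_k]<\infty$ (with the roles of the symbol $\beta_k$ swapped). Your write-up is in fact slightly more explicit than the paper in justifying why finite expectation of a nonnegative series gives almost-sure finiteness and in spelling out the contradiction for the $\liminf$ conclusion.
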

\begin{proof}
Define
\begin{align*}
\gamma_k &:= f(x_k)+ \frac{LM^2(\bar{M}^2+\sigma^2/\bar{m})}{2}\sum_{i=k}^\infty \alpha_i^2, \\
\beta_k &:= \alpha_k m\|\nabla f(x_k)\|^2.
\end{align*}
Let $\C{F}_k$ be the $\sigma$-algebra measuring $\gamma_k$, $\beta_k$ and $x_k$. Then from \eqref{exp-red} we have that
\begin{align}
\E[\gamma_{k+1}|\C{F}_k] & = \E[f(x_{k+1})|\C{F}_k] + \frac{LM^2(\bar{M}^2+\sigma^2/\bar{m})}{2}\sum_{i=k+1}^\infty \alpha_i^2 \notag \\
                         & \le f(x_k) -\alpha_k m\|\nabla f(x_k)\|^2 + \frac{LM^2(\bar{M}^2+\sigma^2/\bar{m})}{2}\sum_{i=k}^\infty \alpha_i^2  \notag \\
                         & =\gamma_k - \beta_k, \label{diff}
\end{align}
which implies that
\[
\E[\gamma_{k+1}-f^{low}|\C{F}_k] \le \gamma_k - f^{low} - \beta_k.
\]
Since $\beta_k\ge0$, we have $0\le\E[\gamma_k-f^{low}]\le \gamma_1-f^{low}<+\infty$. Then according to Definition \ref{def2.1}, $\{\gamma_k-f^{low}\}$ is a supermartingale. Therefore, Proposition \ref{prop2.1} shows that there exists $\gamma$ such that $\lim_{k\to\infty}\gamma_k=\gamma$ with probability 1, and $\E[\gamma]\le \E[\gamma_1]$.
Note that from \eqref{diff} we have $\E[\beta_k] \le \E[\gamma_k]-\E[\gamma_{k+1}]$. Thus,
\[
\E[\sum_{k=0}^{\infty}\beta_k] \le \sum_{k=0}^{\infty}(\E[\gamma_{k}]-\E[\gamma_{k+1}]) < +\infty,
\]
which further yields that
\[
\sum_{k=0}^{\infty}\beta_k = m\sum_{k=0}^\infty \alpha_k\|\nabla f(x_k)\|^2 < +\infty \quad \mbox{  with probability 1}.
\]
Since $\sum_{k=0}^{\infty}\alpha_k =+\infty$, it follows that \eqref{liminf_g} holds.
\end{proof}

\begin{rem}
Note that in Algorithm \ref{sso-uncons} we require that the stepsizes $\alpha_k$ satisfy \eqref{alpha-inf}. This condition is easy to be satisfied. For example, one very simple strategy is to set $\alpha_k=O(1/k)$. In the numerical experiments we will show later, we test the performance of the algorithm using different settings of $\alpha_k$ that satisfy \eqref{alpha-inf}.
\end{rem}

\section{A general framework for randomized stochastic quasi-Newton method for \eqref{orig-prob}}\label{sec:rssa}

In Section \ref{sec:RSSA}, we proposed a general framework for stochastic quasi-Newton methods and studied its convergence. In this section, we propose another algorithmic framework, which is called randomized stochastic quasi-Newton method (RSQN), for solving \eqref{orig-prob}. RSQN is very similar to SQN (Algorithm \ref{sso-uncons}), with the only difference being that RSQN returns the iterate from a randomly chosen iteration as the final approximate solution. The idea of returning the iterate from a randomly chosen iteration is inspired by the RSG method \cite{gl13}. It is shown in \cite{gl13} that by randomly choosing an iteration number $R$, RSG returns $x_R$ as an $\epsilon$-solution, i.e., $\E[\|\nabla f(x_R)\|^2]\le \epsilon$ with the worst-case $\SFO$-calls complexity being $O(\epsilon^{-2})$. Inspired by RSG, we propose the following RSQN (Algorithm \ref{finit-sso-uncons}) and analyze its worst-case $\SFO$-calls complexity.

\begin{algorithm}[ht]
\caption{{\bf RSQN: Randomized stochastic quasi-Newton method for nonconvex stochastic optimization \eqref{orig-prob}}}
\label{finit-sso-uncons}
\begin{algorithmic}[1]
\REQUIRE {Given maximum iteration number $N$, $x_1\in\R^n$, a positive definite matrix $B_1\in\R^{n\times n}$, stepsizes $\{\alpha_k\}_{k\ge 1}$ , batch sizes $\{m_k\}_{k\ge 1}$ and positive safeguard parameters $\{\zeta_k\}_{k\ge 1}$. Randomly choose $R$ according to probability mass function $P_R$ supported on $\{1,\ldots, N\}$.}
\ENSURE {$x_R$}.
\FOR{$k=1,2,\ldots, R$}
\STATE Calculate $G_k$ through \eqref{G-k}, i.e., \[G_k = \frac{1}{m_k}\sum_{i=1}^{m_k}G(x_k,\xi_{k,i}).\]
\STATE Calculate $x_{k+1}$ through \eqref{x-k}, i.e., \[x_{k+1} = x_k - \alpha_k(B_k^{-1}+\zeta_k I)\,G_k.\]
\STATE Generate $B_{k+1}$ such that assumptions {\bf AS.3} and {\bf AS.4} hold.
\ENDFOR
\end{algorithmic}
\end{algorithm}

In the following, we give the worst-case $\SFO$-calls complexity of Algorithm \ref{finit-sso-uncons} for returning $x_R$ such that $\E[\|\nabla f(x_R)\|^2]\le\epsilon$.

\begin{thm} \label{thm3.2}
Assume assumptions {\bf AS.1-4} hold, and the stepsizes $\alpha_k$ in Algorithm \ref{finit-sso-uncons} are chosen such that $0<\alpha_k\le 2m/(LM^2)$ with $\alpha_k<2m/(LM^2)$ for at least one $k$. Moreover, suppose that the probability mass function $P_R$ is given as follows:
\be \label{P_R}
P_R(k):=\mathrm{Prob}\{R=k\}= \frac{ m\alpha_k - L M^2\alpha_k^2/2}{\sum_{k=1}^N
\left ( m\alpha_k - L M^2\alpha_k^2/2\right)}, \quad k=1,\ldots,N.
\ee
Then for any $N\ge1$, we have
\be \label{exp-gra}
\E[\|\nabla f(x_R)\|^2]\le \frac{D_f + (LM^2\sigma^2)/2\sum_{k=1}^N(\alpha_k^2/m_k)}{\sum_{k=1}^N
\left ( m\alpha_k - L M^2\alpha_k^2/2\right)},
\ee
where $D_f:=f(x_1)-f^{low}$ and the expectation is taken with respect to $R$ and $\xi_{[N]}$.
\end{thm}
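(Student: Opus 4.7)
My plan is to follow the template that Ghadimi--Lan use for randomized stochastic gradient methods, adapted to the quasi-Newton update and using the structure already established in Lemma \ref{lem3.1}. The key observation is that, although Lemma \ref{lem3.1} itself was stated using the uniform gradient bound \eqref{bound}, its proof produces a cleaner per-iteration descent if we simply retain $\|\nabla f(x_k)\|^2$ in the bound on $\E[\|G_k\|^2\mid x_k]$ rather than replacing it by $\bar M^2$. That variant is what I need, since Theorem \ref{thm3.2} does not postulate compactness of the iterate sequence.

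First I would reprove the descent step by copying the argument of Lemma \ref{lem3.1} up to the estimate
\[
\E[\|G_k\|^2 \mid x_k] \le \|\nabla f(x_k)\|^2 + \sigma^2/m_k,
\]
which follows from $\E[\delta_k\mid x_k]=0$ and the second part of \eqref{Exp-G-k}, together with the orthogonality $\E[\langle \delta_k,\nabla f(x_k)\rangle\mid x_k]=0$ coming from \textbf{AS.4}. Plugging into \eqref{red} and using \textbf{AS.3} to lower bound $\langle \nabla f(x_k),(B_k^{-1}+\zeta_k I)\nabla f(x_k)\rangle$ by $m\|\nabla f(x_k)\|^2$, I get the refined inequality
\[
\E[f(x_{k+1})\mid x_k] \;\le\; f(x_k) - \bigl(m\alpha_k - \tfrac{L M^2}{2}\alpha_k^2\bigr)\|\nabla f(x_k)\|^2 + \frac{L M^2 \sigma^2}{2}\,\frac{\alpha_k^2}{m_k}.
\]

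Second, I would take total expectation (over $\xi_{[k]}$), sum from $k=1$ to $N$, and invoke the telescoping structure together with $f(x_{N+1})\ge f^{low}$ to get
\[
\sum_{k=1}^N \bigl(m\alpha_k - \tfrac{L M^2}{2}\alpha_k^2\bigr)\,\E[\|\nabla f(x_k)\|^2] \;\le\; D_f + \frac{L M^2 \sigma^2}{2}\sum_{k=1}^N \frac{\alpha_k^2}{m_k}.
\]
The stepsize hypothesis $0<\alpha_k\le 2m/(LM^2)$ guarantees each coefficient $m\alpha_k - \tfrac{L M^2}{2}\alpha_k^2$ is nonnegative, and the strictness for at least one index makes $\sum_{k=1}^N(m\alpha_k - \tfrac{LM^2}{2}\alpha_k^2)>0$, so the probability mass function $P_R$ in \eqref{P_R} is well defined.

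Third, by the very definition of $P_R$ and the independence of $R$ from $\xi_{[N]}$,
\[
\E[\|\nabla f(x_R)\|^2] \;=\; \sum_{k=1}^N P_R(k)\,\E[\|\nabla f(x_k)\|^2] \;=\; \frac{\sum_{k=1}^N(m\alpha_k - \tfrac{LM^2}{2}\alpha_k^2)\,\E[\|\nabla f(x_k)\|^2]}{\sum_{k=1}^N(m\alpha_k - \tfrac{LM^2}{2}\alpha_k^2)},
\]
and dividing the summed descent inequality by the same denominator yields \eqref{exp-gra}.

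The only real subtlety I anticipate is the bookkeeping at the start: being careful to keep $\|\nabla f(x_k)\|^2$ on the right-hand side of the descent rather than upper-bounding it by a constant, so that it can be absorbed into the coefficient $m\alpha_k - \tfrac{LM^2}{2}\alpha_k^2$. Once that variant of Lemma \ref{lem3.1} is in place, the telescoping sum and the weighted-average trick via $P_R$ are routine and mirror the Ghadimi--Lan analysis for RSG.
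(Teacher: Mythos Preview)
Your proposal is correct and follows essentially the same route as the paper's proof: both obtain the refined per-iteration descent with coefficient $m\alpha_k-\tfrac{LM^2}{2}\alpha_k^2$ in front of $\|\nabla f(x_k)\|^2$, telescope to get the summed inequality, and then invoke the definition of $P_R$ to rewrite the left side as $\E[\|\nabla f(x_R)\|^2]$. The only cosmetic difference is that the paper sums the pointwise (pre-expectation) inequality \eqref{red-ori} over $k$ and then takes a single expectation over $\xi_{[N]}$, whereas you take the conditional expectation at each step before summing; the two orderings yield the same bound.
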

\begin{proof}
From \eqref{red-ori} it follows that
\begin{align*}
f(x_{k+1})  \le &\, f(x_k)  -\alpha_k\langle \nabla f(x_k), (B_k^{-1}+\zeta_k I)\nabla f(x_k)\rangle - \alpha_k\langle \nabla f(x_k), (B_k^{-1}+\zeta_k I)\delta_k\rangle + \\
&\, \frac{L}{2}\alpha_k^2M^2 [\|\nabla f(x_k)\|^2 + 2\langle \nabla f(x_k), \delta_k\rangle + \|\delta_k\|^2]\\
\le & \,f(x_k) - \left( m\alpha_k - \frac{L M^2}{2}\alpha_k^2\right) \|\nabla f(x_k)\|^2 + \frac{L M^2}{2}\alpha_k^2\|\delta_k\|^2 + LM^2\alpha_k^2 \langle \nabla f(x_k),\delta_k\rangle \\
&\, - \alpha_k\langle \nabla f(x_k), (B_k^{-1}+\zeta_k I)^{-1}\delta_k \rangle,
\end{align*}
where $\delta_k=G_k-\nabla f(x_k)$. Summing up the above inequality over $k=1,\ldots,N$ and noticing that $\alpha_k\le 2m/(LM^2)$, we have
\begin{align}
&\sum_{k=1}^N
\left ( m\alpha_k - \frac{L M^2}{2}\alpha_k^2\right) \|\nabla f(x_k)\|^2 \notag\\
\le & f(x_1) - f^{low} + \frac{LM^2}{2}\sum_{k=1}^N\alpha_k^2\|\delta_k\|^2 + \sum_{k=1}^N(LM^2\alpha_k^2 \langle \nabla f(x_k),\delta_k\rangle - \alpha_k\langle \nabla f(x_k), (B_k^{-1}+\zeta_k I)^{-1}\delta_k \rangle).  \label{sum-gra}
\end{align}
Notice that both $x_k$ and $B_k$ depend only on $\xi_{[k-1]}$. Thus, by {\bf AS.2} and {\bf AS.4} we have that
\[
\E_{\xi_k}[\langle \nabla f(x_k), \delta_k \rangle|\xi_{[k-1]}]=0, \qquad \E_{\xi_k}[\langle \nabla f(x_k), (B_k^{-1}+\zeta_k I)\delta_k \rangle|\xi_{[k-1]}]=0.
\]
Moreover, from \eqref{Exp-G-k} it follows that $\E_{\xi_k}[\|\delta_k\|^2|\xi_{[k-1]}]\le\sigma^2/m_k$. Therefore, taking the expectation on both sides of \eqref{sum-gra} with respect to $\xi_{[N]}$ yields
\be\label{int-exp}
\sum_{k=1}^N
 ( m\alpha_k - L M^2\alpha_k^2/2)\E_{\xi_{[N]}}[\|\nabla f(x_k)\|^2] \le f(x_1) - f^{low} + \frac{LM^2\sigma^2}{2}\sum_{k=1}^N\frac{\alpha_k^2}{m_k}.
\ee
Since $R$ is a random variable with probability mass function $P_R$ given in \eqref{P_R}, it follows that
\be
\E[\|\nabla f(x_R)\|^2] = \E_{R,\xi_{[N]}}[\|\nabla f(x_R)\|^2] = \frac{\sum_{k=1}^N
\left ( m\alpha_k - L M^2\alpha_k^2/2\right)\E_{\xi_{[N]}}[\|\nabla f(x_k)\|^2]}{\sum_{k=1}^N
\left ( m\alpha_k - L M^2\alpha_k^2/2\right)},
\ee
which together with \eqref{int-exp} implies \eqref{exp-gra}.
\end{proof}

\begin{rem}
Different from SQN (Algorithm \ref{sso-uncons}), stepsizes $\alpha_k$ in RSQN (Algorithm \ref{finit-sso-uncons}) are not required to satisfy the condition \eqref{alpha-inf}. Besides, the assumption on the boundedness of $\{\|\nabla f(x_k)\|\}$ is not needed in RSQN.
\end{rem}

The following complexity result follows immediately from Theorem \ref{thm3.2}.
\begin{cor}\label{cor2.2}
Under the same conditions as in Theorem \ref{thm3.2}, further assume that the stepsizes $\alpha_k = m/(LM^2)$ and the batch sizes $m_k=\bar{m}$ for all $k=1,\ldots,N$ for some integer $\bar{m}\ge 1$. Then the following holds
\be \label{exp-g}
\E[\|\nabla f(x_R)\|^2] \le \frac{2LM^2D_f}{Nm^2} + \frac{\sigma^2}{\bar{m}},
\ee
where the expectation is taken with respect to $R$ and $\xi_{[N]}$.
\end{cor}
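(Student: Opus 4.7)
The plan is to deduce the corollary as a direct arithmetic consequence of Theorem \ref{thm3.2}: one simply substitutes the constant choices $\alpha_k = m/(LM^2)$ and $m_k = \bar{m}$ into the bound \eqref{exp-gra} and simplifies. No new analytic work is needed, so I would not reopen the martingale/conditioning arguments; I only need to check that the hypotheses of Theorem \ref{thm3.2} hold and then carry out the simplification.

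First I would verify admissibility of the stepsize. Since $0<m/(LM^2)<2m/(LM^2)$, the condition $0<\alpha_k\le 2m/(LM^2)$ with strict inequality holds for every $k$, so Theorem \ref{thm3.2} applies and inequality \eqref{exp-gra} is in force. Under these constant choices, the probability mass function $P_R$ in \eqref{P_R} collapses to the uniform distribution on $\{1,\ldots,N\}$, since each summand defining $P_R(k)$ takes the same value, but this observation is not needed for the proof.

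Next I would simplify the denominator of \eqref{exp-gra}. Each term $m\alpha_k - LM^2\alpha_k^2/2$ evaluates to $m^2/(LM^2) - m^2/(2LM^2) = m^2/(2LM^2)$, so the sum over $k=1,\ldots,N$ equals $Nm^2/(2LM^2)$. For the numerator, $\sum_{k=1}^N \alpha_k^2/m_k = Nm^2/(L^2M^4 \bar{m})$, and therefore $(LM^2\sigma^2/2)\sum_{k=1}^N \alpha_k^2/m_k = Nm^2\sigma^2/(2LM^2\bar{m})$. Substituting these two expressions into \eqref{exp-gra} and dividing termwise gives
\[
\E[\|\nabla f(x_R)\|^2] \le \frac{D_f}{Nm^2/(2LM^2)} + \frac{Nm^2\sigma^2/(2LM^2\bar{m})}{Nm^2/(2LM^2)} = \frac{2LM^2 D_f}{Nm^2} + \frac{\sigma^2}{\bar{m}},
\]
which is exactly \eqref{exp-g}.

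There is no real obstacle here; the only thing to be careful about is bookkeeping of the powers of $L$, $M$, and $m$ during cancellation. The takeaway of the calculation, which I would highlight in the final write-up, is the clean decomposition of the bound into an optimization error of order $O(1/N)$ that decays with the iteration budget and a stochastic error of order $O(1/\bar{m})$ that can be controlled by enlarging the mini-batch size $\bar{m}$; this decomposition is what then drives the $\SFO$-calls complexity estimate.
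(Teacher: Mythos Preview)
Your proposal is correct and matches the paper's approach: the paper states that the corollary ``follows immediately from Theorem~\ref{thm3.2}'' and gives no further argument, so your direct substitution of $\alpha_k=m/(LM^2)$ and $m_k=\bar m$ into \eqref{exp-gra} with the ensuing simplification is exactly what is intended. Your bookkeeping of the powers of $L$, $M$, and $m$ is accurate.
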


From Corollary \ref{cor2.2} we can see that the right hand side of \eqref{exp-g} depends on the batch size $\bar{m}$. Once $\bar{m}$ is fixed, no matter how large the maximum iteration number $N$ is, the right hand side of \eqref{exp-g} is always lower bounded by $\sigma^2/\bar{m}$. Since we want $\E[\|\nabla f(x_R)\|^2]$ to be as small as possible, we expect that it approaches zero when $N$ is sufficiently large. Therefore, $\bar{m}$ has to be chosen properly. The following corollary provides a choice of $\bar{m}$ such that the worst-case $\SFO$-calls complexity of RSQN method is in the order of $O(\epsilon^{-2})$ for obtaining an $\epsilon$-solution.

\begin{cor}\label{cor2.3}
Let $\bar{N}$ be the total number of $\SFO$-calls needed to calculate stochastic gradient $G_k$ in Step 2 of Algorithm \ref{finit-sso-uncons} for all the iterations. Under the same conditions as in Corollary \ref{cor2.2}, if we further assume that the batch size $m_k$ is defined as
\be \label{batch-size-m}
m_k = \bar{m} := \left\lceil \min \left\{ \bar{N}, \max\left\{1, \frac{\sigma}{L}\sqrt{\frac{\bar{N}}{\tilde{D}}}\right\}\right\}\right\rceil,
\ee
where $\tilde{D}$ is some problem-independent positive constant, then we have
\be
\E[\|\nabla f(x_R)\|^2]   \le   \frac{4LM^2 D_{f} }{\bar{N}m^2}\left(1+\frac{\sigma}{L}\sqrt{\frac{\bar{N}}{\tilde{D}}}\right) +  \max\left\{\frac{ \sigma^2}{\bar{N}}, \frac{ \sigma L\sqrt{\tilde{D}}}{\sqrt{\bar{N}}}\right\},
\ee
where the expectation is taken with respect to $R$ and $\xi_{[N]}$.
\end{cor}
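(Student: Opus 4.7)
The plan is to invoke Corollary \ref{cor2.2} and then translate the iteration count $N$ into the total number of $\SFO$-calls $\bar{N}$ using the simple book-keeping identity $\bar{N}=N\bar{m}$ (every iteration consumes exactly $\bar{m}$ stochastic gradient queries to form $G_k$). Substituting $N=\bar{N}/\bar{m}$ into
\[
\E[\|\nabla f(x_R)\|^2] \le \frac{2LM^2 D_f}{Nm^2} + \frac{\sigma^2}{\bar{m}}
\]
yields the working bound
\[
\E[\|\nabla f(x_R)\|^2] \le \frac{2LM^2 D_f \bar{m}}{\bar{N}m^2} + \frac{\sigma^2}{\bar{m}},
\]
so the task reduces to controlling the first term by an upper bound on $\bar{m}$ and the second term by a lower bound on $\bar{m}$.

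For the upper bound on $\bar{m}$, I would use $\lceil t\rceil\le t+1$ applied to the $\min/\max$ expression in \eqref{batch-size-m}. This gives $\bar{m}\le \max\{1,(\sigma/L)\sqrt{\bar{N}/\tilde{D}}\}+1$, and a quick case split on whether $(\sigma/L)\sqrt{\bar{N}/\tilde{D}}$ is $\ge 1$ or $<1$ shows that in either case $\bar{m}\le 2\bigl(1+(\sigma/L)\sqrt{\bar{N}/\tilde{D}}\bigr)$. Inserting this into the first term produces precisely $\tfrac{4LM^2 D_f}{\bar{N}m^2}\bigl(1+(\sigma/L)\sqrt{\bar{N}/\tilde{D}}\bigr)$, which accounts for the factor of $4$ in the target bound.

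For the lower bound on $\bar{m}$, I would use $\lceil t\rceil\ge t$ together with $\bar{m}\ge \min\{\bar{N},\max\{1,(\sigma/L)\sqrt{\bar{N}/\tilde{D}}\}\}$. Split into two regimes: if the $\min$ is attained by $\bar{N}$, then $\bar{m}\ge\bar{N}$ and $\sigma^2/\bar{m}\le \sigma^2/\bar{N}$; otherwise $\bar{m}\ge (\sigma/L)\sqrt{\bar{N}/\tilde{D}}$ (the case $\bar{m}\ge 1$ is subsumed here because when $(\sigma/L)\sqrt{\bar{N}/\tilde{D}}<1$ we have $\sigma^2 \le \sigma L\sqrt{\tilde{D}/\bar{N}}$, so the $\max$ term $\sigma L\sqrt{\tilde{D}}/\sqrt{\bar{N}}$ already dominates $\sigma^2/\bar{m}$). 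In the remaining regime, $\sigma^2/\bar{m}\le \sigma L\sqrt{\tilde{D}}/\sqrt{\bar{N}}$, and combining the two regimes gives $\sigma^2/\bar{m}\le \max\{\sigma^2/\bar{N},\sigma L\sqrt{\tilde{D}}/\sqrt{\bar{N}}\}$.

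The main obstacle is the bookkeeping of the nested $\min$/$\max$/$\lceil\cdot\rceil$ in the definition of $\bar{m}$; the arithmetic is routine, but one has to be careful that the case-by-case bounds on $\bar{m}$ cleanly combine into the single closed-form expression on the right-hand side of the statement, and in particular that the small-$\bar{N}$ regime where $(\sigma/L)\sqrt{\bar{N}/\tilde{D}}<1$ does not contribute a spurious term outside the stated maximum. Once those cases are verified, the proof follows by substituting the two bounds into the working inequality and collecting terms.
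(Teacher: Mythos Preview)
Your approach is essentially the paper's: start from Corollary~\ref{cor2.2}, convert the iteration count $N$ into the $\SFO$ budget $\bar N$ via $N\approx\bar N/\bar m$, and then bound $\bar m$ above and below by unpacking the $\min/\max/\lceil\cdot\rceil$ in \eqref{batch-size-m}. The only cosmetic difference is where the constant $4$ arises: the paper sets $N=\lceil\bar N/\bar m\rceil$, uses the (slack) bound $N\ge\bar N/(2\bar m)$, and pairs it with the sharp estimate $\bar m\le 1+(\sigma/L)\sqrt{\bar N/\tilde D}$, whereas you assume the exact identity $\bar N=N\bar m$ and instead absorb the extra factor of $2$ into the looser estimate $\bar m\le 2\bigl(1+(\sigma/L)\sqrt{\bar N/\tilde D}\bigr)$.
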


\begin{proof}
Note that the number of iterations of Algorithm \ref{finit-sso-uncons} is at most $N=\lceil \bar{N}/\bar{m}\rceil$. Obviously, $N\ge \bar{N}/(2\bar{m})$. From Corollary \ref{cor2.2} we have that
\begin{align}
\E[\|\nabla f(x_R)\|^2] & \le \frac{2L M^2D_{f}}{Nm^2} + \frac{\sigma^2}{\bar{m}}  \le  \frac{4LM^2 D_{f}}{\bar{N}m^2}\bar{m} + \frac{\sigma^2}{\bar{m}} \label{exp-1} \\
& \le  \frac{4LM^2 D_{f} }{\bar{N}m^2}\left(1+\frac{\sigma}{L}\sqrt{\frac{\bar{N}}{\tilde{D}}}\right) +  \max\left\{\frac{ \sigma^2}{\bar{N}}, \frac{ \sigma L\sqrt{\tilde{D}}}{\sqrt{\bar{N}}}\right\},  \notag
\end{align}
which completes the proof.
\end{proof}

The following corollary follows immediately from Corollary \ref{cor2.3}.
\begin{cor}\label{cor3.4}
Under the same conditions as Corollary \ref{cor2.3}, for any given $\epsilon>0$, we further assume that the total number of $\SFO$ calls $\bar{N}$ to calculate $G_k$ in Step 2 of Algorithm \ref{finit-sso-uncons} satisfies
\be \label{bar-N}
\bar{N}\ge\max\left\{\frac{C_1^2}{\epsilon^2} + \frac{4C_2}{\epsilon},\frac{\sigma^2}{L^2\tilde{D}}\right\}
\ee
where
\[
C_1=\frac{4\sigma M^2D_f}{m^2\sqrt{\tilde{D}}} + \sigma L\sqrt{\tilde{D}}, \quad C_2=\frac{4LM^2D_f}{m^2},
\]
and $\tilde{D}$ is same as in \eqref{batch-size-m}. Then we have
\[
\E[\|\nabla f(x_R)\|^2]\le\epsilon,
\]
where the expectation is taken with respect to $R$ and $\xi_{[N]}$. It follows that to achieve $\E[\|\nabla f(x_R)\|^2]\le \epsilon$, the number of $\SFO$-calls needed to compute $G_k$ in Step 2 of Algorithm \ref{finit-sso-uncons} is at most in the order of $O(\epsilon^{-2})$.
\end{cor}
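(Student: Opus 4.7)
The plan is to start from the bound established in Corollary \ref{cor2.3} and show that the two hypotheses on $\bar{N}$ are precisely calibrated to force the right-hand side below $\epsilon$.

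First I would rewrite the bound from Corollary \ref{cor2.3}. Observe that the condition $\bar{N} \ge \sigma^2/(L^2\tilde{D})$ in \eqref{bar-N} is equivalent to $\sigma^2/\bar{N} \le \sigma L\sqrt{\tilde{D}}/\sqrt{\bar{N}}$, so the $\max$ in Corollary \ref{cor2.3} collapses to its second argument. Regrouping the terms by their rate of decay in $\bar{N}$, the bound becomes
\[
\E[\|\nabla f(x_R)\|^2] \le \frac{C_2}{\bar{N}} + \frac{C_1}{\sqrt{\bar{N}}},
\]
where $C_1$ and $C_2$ are exactly the constants defined in the statement. So the entire task reduces to showing
\[
\bar{N} \ge \frac{C_1^2}{\epsilon^2} + \frac{4C_2}{\epsilon} \quad \Longrightarrow \quad \frac{C_2}{\bar{N}} + \frac{C_1}{\sqrt{\bar{N}}} \le \epsilon.
\]

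Since the function $\bar{N}\mapsto C_2/\bar{N}+C_1/\sqrt{\bar{N}}$ is decreasing in $\bar{N}$, it suffices to verify the inequality at the boundary value $\bar{N} = C_1^2/\epsilon^2 + 4C_2/\epsilon = (C_1^2+4C_2\epsilon)/\epsilon^2$. Plugging this in and setting $Z:=\sqrt{C_1^2+4C_2\epsilon}$, the desired inequality is equivalent to $C_1 Z + C_2\epsilon \le Z^2 = C_1^2+4C_2\epsilon$, i.e., $C_1 Z \le C_1^2 + 3C_2\epsilon$. Squaring both sides (both are nonnegative) reduces this to $C_1^2(C_1^2+4C_2\epsilon) \le (C_1^2+3C_2\epsilon)^2$, which expands to $0 \le 2C_1^2 C_2\epsilon + 9C_2^2\epsilon^2$, manifestly true. (The edge cases $C_1=0$ or $C_2=0$ are handled trivially by one term disappearing.)

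Finally, for the complexity conclusion, I would read off from \eqref{bar-N} that the number of $\SFO$-calls $\bar{N}$ required is at most $C_1^2/\epsilon^2 + 4C_2/\epsilon + \sigma^2/(L^2\tilde{D})$, which is $O(\epsilon^{-2})$ as $\epsilon\to 0$ since only the $C_1^2/\epsilon^2$ term is dominant. I do not expect any real obstacle here; the argument is a short piece of bookkeeping plus the elementary quadratic manipulation above, and the only point that requires a bit of care is making sure that the assumption $\bar{N}\ge \sigma^2/(L^2\tilde{D})$ is used to identify which branch of the $\max$ in Corollary \ref{cor2.3} is active before the algebraic verification.
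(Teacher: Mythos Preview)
Your proposal is correct and follows essentially the same approach as the paper: use the second clause of \eqref{bar-N} to collapse the $\max$ in Corollary \ref{cor2.3}, rewrite the resulting bound as $C_1/\sqrt{\bar{N}}+C_2/\bar{N}$, and check that the first clause of \eqref{bar-N} forces this below $\epsilon$. The only cosmetic difference is in the last algebraic step---the paper recognizes the inequality as a quadratic in $\sqrt{\bar{N}}$ and applies the quadratic formula (noting $\sqrt{\bar{N}}\ge (C_1+\sqrt{C_1^2+4\epsilon C_2})/(2\epsilon)$), whereas you substitute the boundary value and square.
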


\begin{proof}
\eqref{bar-N} indicates that
\[ \sqrt{\bar{N}} \ge\frac{\sqrt{C_1^2 + 4\epsilon C_2}}{\epsilon } \ge \frac{\sqrt{C_1^2 + 4\epsilon C_2} + C_1}{2\epsilon}. \]
\eqref{bar-N} also implies that $\sigma^2/\bar{N}\le \sigma L\sqrt{\tilde{D}}/\sqrt{\bar{N}}$. Then from Corollary \ref{cor2.3} we have that
\[
\E[\|\nabla f(x_R)\|^2]  \le \frac{4LM^2 D_{f} }{\bar{N}m^2}\left(1+\frac{\sigma}{L}\sqrt{\frac{\bar{N}}{\tilde{D}}}\right) +  \frac{ \sigma L\sqrt{\tilde{D}}}{\sqrt{\bar{N}}}
 = \frac{C_1}{\sqrt{\bar{N}}} + \frac{C_2}{\bar{N}}
 \le \epsilon.
\]
\end{proof}

\begin{rem}\label{rem3.2}
In Corollaries \ref{cor2.3} and \ref{cor3.4} we did not consider the $\SFO$-calls that may be involved in updating $B_{k+1}$ in Step 4 of the algorithms. In the next section, we will consider two specific updating schemes for $B_k$, and analyze their $\SFO$-calls complexities for calculating $B_k$.
\end{rem}

\section{Two specific updating schemes for $B_k$}\label{sec:SDBFGS}

In Sections \ref{sec:RSSA} and \ref{sec:rssa}, we proposed two general frameworks for stochastic quasi-Newton methods for solving \eqref{orig-prob} and analyzed their convergence and worst-case $\SFO$-calls complexity, respectively. In both frameworks, we require that the Hessian approximation $B_k$ satisfies assumptions {\bf AS.3} and {\bf AS.4}. In this section, we study two specific updating schemes for $B_k$ such that {\bf AS.3} and {\bf AS.4} always hold.

\subsection{Stochastic damped BFGS updating formula}

In the setting of deterministic optimization, the classical BFGS algorithm updates the $B_k$ through the formula \eqref{bfgs}. It can be proved that $B_{k+1}$ is positive definite as long as $B_k$ is positive definite and $s_k^\top y_k>0$ (see, e.g., \cite{nw06,sy06}). Line search techniques are usually used to ensure that $s_k^\top y_k>0$ is satisfied. However, in stochastic quasi-Newton method, line search techniques cannot be used because the objective function value is assumed to be difficult to obtain. As a result, how to preserve the positive definiteness of $B_k$ is a main issue in designing stochastic quasi-Newton algorithms.

In \cite{mr10}, the RES algorithm is proposed for strongly convex stochastic optimization, in which iterates are updated via \eqref{x-k} where $\zeta_k$ is set as a positive constant $\Gamma$.
The following formula is adopted in \cite{mr10} for calculating the difference of the gradients:
\[
\hat{y}_k = \bar{G}_{k+1} - G_k - \hat{\delta} s_k,
\]
where $\hat{\delta}>0$ and
\[
\bar{G}_{k+1} := \frac{1}{m_k}\sum_{i=1}^{m_k}G(x_{k+1},\xi_{k,i}).
\]
It should be noted that the same sample set $\{\xi_{k,1},\ldots,\xi_{k,m_k}\}$ is used to compute $G_k$ and $\bar{G}_{k+1}$. $B_{k+1}$ is then calculated by the shifted BFGS update:
\be \label{re-bfgs}
B_{k+1} = B_k + \frac{\hat{y}_k\hat{y}_k\tr }{s_k\tr \hat{y}_k} - \frac{B_ks_ks_k\tr B_k}{s_k\tr B_ks_k} + \hat{\delta} I,
\ee
where the shifting term $\hat{\delta}I$ is added to prevent $B_{k+1}$ from being close to singular.
It is proved in \cite{mr10} that $B_{k+1}\succeq\hat{\delta}I$ under the assumption that $f$ is strongly convex. However, \eqref{re-bfgs} cannot guarantee the positive definiteness of $B_{k+1}$ for nonconvex problems. Hence, we propose the following stochastic damped BFGS updating procedure (Procedure \ref{DBFGS}) for nonconvex problems. The damped BFGS updating procedure has been used in sequential quadratic programming method for constrained optimization in deterministic setting (see, e.g., \cite{nw06}).

{\floatname{algorithm}{Procedure}
\begin{algorithm}[ht]
\caption{{\bf Stochastic Damped-BFGS update (SDBFGS) }}\label{DBFGS}
\begin{algorithmic}[1]
\REQUIRE {Given $\delta>0$, $\xi_k$, $B_k$, $G_k$, $x_k$ and $x_{k+1}$.}
\ENSURE {$B_{k+1}$.}
   \STATE Calculate $s_k=x_{k+1}-x_k$ and calculate $\hat{y}_k$ through
   \[
   \hat{y}_k = \bar{G}_{k+1} - G_k - \delta s_k,
   \]
   where $\bar{G}_{k+1} := \frac{1}{m_k}\sum_{i=1}^{m_k}G(x_{k+1},\xi_{k,i})$.
   \STATE Calculate
   \[\hat{r}_k=\hat{\theta}_k \hat{y}_k +(1-\hat{\theta}_k)B_ks_k,\]
    where $\hat{\theta}_k$ is calculated through:
\[
\hat{\theta}_k = \begin{cases}
1, & \mbox{if } s_k\tr \hat{y}_k \ge 0.2s_k\tr B_ks_k,\\
(0.8s_k\tr B_ks_k)/(s_k\tr B_ks_k - s_k\tr \hat{y}_k), & \mbox{if } s_k\tr \hat{y}_k<0.2s_k\tr B_ks_k.
\end{cases}
\]
    \STATE Calculate $B_{k+1}$ through
\be \label{sto-bfgs}
B_{k+1} = B_k + \frac{\hat{r}_k \hat{r}_k\tr }{s_k\tr \hat{r}_k} - \frac{B_ks_ks_k\tr B_k}{s_k\tr B_ks_k} + \delta I.
\ee
\end{algorithmic}
\end{algorithm}}

\begin{rem}
Notice that the most significant difference between Procedure \ref{DBFGS} and RES lies in that $\hat{r}_k$, which is a convex combination of $\hat{y}_k$ and $B_ks_k$, is used to replace $\hat{y}_k$ in the updating formula \eqref{sto-bfgs} for $B_{k+1}$.
\end{rem}

The following lemma shows that $\{B_k\}$ obtained by Procedure \ref{DBFGS} is uniformly positive definite.

\begin{lem}\label{lem5.1}
Suppose that $B_k$ is positive definite, then $B_{k+1}$ generated by Procedure \ref{DBFGS} satisfies
\be\label{B-k+1}
B_{k+1}\succeq\delta I.
\ee
\end{lem}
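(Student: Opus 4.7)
The plan is to prove the lemma in two conceptually separate steps: first establish the crucial positivity condition $s_k^{\sf T}\hat{r}_k>0$ on the curvature-like quantity, and then invoke the standard BFGS positive-definiteness argument applied to the shifted matrix $B_{k+1}-\delta I$.

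\textbf{Step 1: Show that $s_k^{\sf T}\hat{r}_k\ge 0.2\,s_k^{\sf T}B_k s_k>0$ by case analysis on $\hat{\theta}_k$.} Since $B_k\succ 0$ and (assuming $s_k\ne 0$, otherwise the update is trivial) we have $s_k^{\sf T}B_k s_k>0$. Writing $a=s_k^{\sf T}B_k s_k$ and $b=s_k^{\sf T}\hat{y}_k$, the definition of $\hat{r}_k$ gives $s_k^{\sf T}\hat{r}_k=\hat{\theta}_k b+(1-\hat{\theta}_k)a$. In the case $b\ge 0.2\,a$ we have $\hat{\theta}_k=1$, so $s_k^{\sf T}\hat{r}_k=b\ge 0.2\,a$. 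In the case $b<0.2\,a$, first note $a-b>0.8\,a>0$, so $\hat{\theta}_k=0.8a/(a-b)$ is well-defined and lies in $(0,1)$; substituting yields
\begin{equation*}
s_k^{\sf T}\hat{r}_k=\frac{0.8ab}{a-b}+\frac{(a-b)-0.8a}{a-b}\,a=\frac{0.8ab+0.2a^{2}-ab}{a-b}=\frac{0.2a(a-b)}{a-b}=0.2a.
\end{equation*}
Thus in both cases $s_k^{\sf T}\hat{r}_k\ge 0.2\,s_k^{\sf T}B_k s_k>0$.

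\textbf{Step 2: Apply the classical BFGS positive-definiteness argument.} Let $M:=B_{k+1}-\delta I$, i.e.
\begin{equation*}
M=B_k+\frac{\hat{r}_k\hat{r}_k^{\sf T}}{s_k^{\sf T}\hat{r}_k}-\frac{B_k s_k s_k^{\sf T}B_k}{s_k^{\sf T}B_k s_k}.
\end{equation*}
For any nonzero $z\in\mathbb{R}^{n}$, the Cauchy--Schwarz inequality for the inner product induced by the positive definite $B_k$ gives $(s_k^{\sf T}B_k z)^{2}\le(z^{\sf T}B_k z)(s_k^{\sf T}B_k s_k)$, with equality iff $z$ is parallel to $s_k$. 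Hence $z^{\sf T}B_k z-(s_k^{\sf T}B_k z)^{2}/(s_k^{\sf T}B_k s_k)\ge 0$. The remaining term $(\hat{r}_k^{\sf T}z)^{2}/(s_k^{\sf T}\hat{r}_k)$ is nonnegative by Step~1. If $z$ is not parallel to $s_k$, the first part is strictly positive; if $z=\lambda s_k$ with $\lambda\ne 0$, then $(\hat{r}_k^{\sf T}z)^{2}/(s_k^{\sf T}\hat{r}_k)=\lambda^{2}\,s_k^{\sf T}\hat{r}_k>0$. Either way $z^{\sf T}Mz>0$, so $M\succ 0$, whence $B_{k+1}=M+\delta I\succeq \delta I$.

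\textbf{Main obstacle.} The only nontrivial piece is Step 1, specifically verifying that the algebraic identity $s_k^{\sf T}\hat{r}_k=0.2\,s_k^{\sf T}B_k s_k$ in the second case follows exactly from the chosen value of $\hat{\theta}_k$; once this uniform lower bound is in hand, Step~2 is the textbook BFGS positivity proof and the conclusion is immediate. I would also flag the corner case $s_k=0$ (in which the updating procedure would typically be skipped, but formally $B_{k+1}=B_k+\delta I\succeq\delta I$ still holds since $B_k\succ 0$).
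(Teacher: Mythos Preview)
Your proof is correct and follows essentially the same approach as the paper: an identical case analysis in Step~1 to obtain $s_k^{\sf T}\hat r_k\ge 0.2\,s_k^{\sf T}B_k s_k>0$, followed by the standard argument that the BFGS-type update is positive semidefinite. The only cosmetic difference is that in Step~2 the paper uses the factorization $B_k^{1/2}(I-u_ku_k^{\sf T}/u_k^{\sf T}u_k)B_k^{1/2}\succeq 0$ with $u_k=B_k^{1/2}s_k$, whereas you use the equivalent Cauchy--Schwarz argument in the $B_k$-inner product (and in fact establish the slightly stronger $M\succ 0$, which is not needed for the stated conclusion).
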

\begin{proof}
From the definition of $\hat{r}_k$, we have that
\[
s_k\tr \hat{r}_k = \hat{\theta}_k(s_k\tr \hat{y}_k - s_k\tr B_ks_k) + s_k\tr B_ks_k
               = \begin{cases}
               s_k\tr \hat{y}_k,\quad & \mbox{if }s_k\tr \hat{y}_k \ge 0.2 s_k\tr B_ks_k,\\
               0.2s_k\tr B_ks_k,\quad & \mbox{if }s_k\tr \hat{y}_k < 0.2 s_k\tr B_ks_k,
               \end{cases}
\]
which implies $s_k\tr\hat{r}_k\geq 0.2s_k\tr B_ks_k$.
Denote $u_k=B_k^{\frac{1}{2}}s_k$. Then we have
\[
B_k - \frac{B_ks_ks_k\tr B_k}{s_k\tr B_k s_k} = B_k^{\frac{1}{2}}\left(I - \frac{u_ku_k\tr}{u_k\tr u_k}\right)B_k^{\frac{1}{2}}.
\]
Since $I - \frac{u_ku_k\tr}{u_k\tr u_k}\succeq 0$ and $s_k\tr \hat{r}_k>0$, we have that $B_k + \frac{\hat{r}_k \hat{r}_k\tr }{s_k\tr \hat{r}_k} - \frac{B_ks_ks_k\tr B_k}{s_k\tr B_ks_k}\succeq 0$. It then follows from \eqref{sto-bfgs} that $B_{k+1}\succeq \delta I$.
\end{proof}

From Lemma \ref{lem5.1} we can see that, if starting with $B_1\succeq\delta I$, we have $B_k\succeq \delta I$ for all $k$. So if we further choose $\zeta_k\ge\zeta$ for any positive constant $\zeta$, then it holds that
\[
\zeta I\preceq B_k^{-1}+\zeta_k I\preceq \left(\frac{1}{\delta}+\zeta\right)I, \quad \mbox{for all $k$},
\]
which satisfies the assumption {\bf AS.3} with $m=\zeta$ and $M=\zeta+1/\delta$. Moreover, Since $\bar{G}_{k+1}$ is dependent only on $\xi_k$, it follows from \eqref{sto-bfgs} that $B_{k+1}$ is dependent only on $\xi_{[k]}$, which satisfies the assumption {\bf AS.4}.
Therefore, we conclude that assumptions {\bf AS.3} and {\bf AS.4} hold for $B_k$ generated by Procedure \ref{DBFGS}.
We should also point out that in stochastic damped BFGS update Procedure \ref{DBFGS}, the shifting parameter $\delta$ can be any positive scalar. But the shifting parameter $\hat{\delta}$ in \eqref{re-bfgs} used in RES is required to be smaller than the smallest eigenvalue of the  Hessian of the strongly convex function $f$, which is usually negative for nonconvex problem.

Note that in Step 1 of Procedure \ref{DBFGS}, the stochastic gradient at $x_{k+1}$ that is dependent on $\xi_{k}$ is computed. Thus, when Procedure \ref{DBFGS} is called at the $k$-th iteration to generate $B_{k+1}$ in Step 4 of Algorithm \ref{finit-sso-uncons}, another $m_k$ $\SFO$-calls are needed. As a result, the number of $\SFO$-calls at the $k$-th iteration of Algorithm \ref{finit-sso-uncons} becomes $2m_k$. This leads to the following complexity result for Algorithm \ref{finit-sso-uncons}.

\begin{thm}
Denote $N_{sfo}$ as the total number of $\SFO$-calls in Algorithm \ref{finit-sso-uncons} with Procedure \ref{DBFGS} to generate $B_{k+1}$. Under the same conditions as in Corollary \ref{cor3.4}, to achieve  $\E[\|\nabla f(x_R)\|^2]\le \epsilon$, $N_{sfo}$ is at most $2\bar{N}$ where $\bar{N}$ satisfies \eqref{bar-N}, i.e., is in the order of $O(\epsilon^{-2})$.
\end{thm}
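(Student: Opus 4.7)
The plan is a simple bookkeeping argument on top of Corollary \ref{cor3.4}, and I do not anticipate any genuine obstacle. First I would account for the $\SFO$ cost of a single outer iteration of Algorithm \ref{finit-sso-uncons} when Step 4 is implemented by Procedure \ref{DBFGS}. Step 2 of the main loop draws $m_k$ samples to form $G_k$, and Step 1 of the procedure draws another $m_k$ evaluations of the oracle at the new point $x_{k+1}$ to form $\bar{G}_{k+1}$; the other steps only perform algebraic updates and do not invoke $\SFO$. This gives a per-iteration cost of exactly $2m_k$.

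Next I would sum over $k=1,\dots,N$ to obtain $N_{sfo}=2\sum_{k=1}^N m_k=2\bar{N}$, where the second equality is the definition of $\bar{N}$ introduced in Corollary \ref{cor2.3}. Finally, invoking Corollary \ref{cor3.4}, the choice of $\bar{N}$ dictated by \eqref{bar-N} that guarantees $\E[\|\nabla f(x_R)\|^2]\le\epsilon$ is already of order $O(\epsilon^{-2})$; multiplying by $2$ preserves the order, yielding $N_{sfo}\le 2\bar{N}=O(\epsilon^{-2})$ as claimed.

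One subtle point worth emphasizing in the write-up is that the random variables $\xi_{k,1},\dots,\xi_{k,m_k}$ reused inside Procedure \ref{DBFGS} to assemble $\bar{G}_{k+1}$ are the same batch as those used in Step 2 for $G_k$. This reuse is needed only to preserve the $\xi_{[k]}$-measurability of $B_{k+1}$ (i.e., assumption {\bf AS.4}), but it does not reduce the $\SFO$-call count, since each evaluation $G(x_{k+1},\xi_{k,i})$ is a fresh oracle query at a new input point. Hence the factor of two in $N_{sfo}\le 2\bar{N}$ is sharp, and Remark \ref{rem3.2} is made precise for this particular updating scheme.
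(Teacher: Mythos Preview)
Your proposal is correct and essentially identical to the paper's reasoning: the paper does not give a separate formal proof of this theorem, but the paragraph immediately preceding it makes exactly your per-iteration $2m_k$ count (Step~2 plus Step~1 of Procedure~\ref{DBFGS}), and the theorem is then stated as a direct consequence of Corollary~\ref{cor3.4}. Your added remark clarifying why reusing the batch $\xi_{k,1},\dots,\xi_{k,m_k}$ at the new point $x_{k+1}$ still incurs $m_k$ fresh oracle calls is a useful elaboration that the paper leaves implicit.
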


\subsection{Stochastic cyclic-BB-like updating formula}\label{sec:SBB}

Note that computing $B_k^{-1}G_k$ in the updating formula for $x_k$ \eqref{x-k} might be costly if $B_k$ is dense or the problem dimension is large. To overcome this potential difficulty, we propose a cyclic Barzilai-Borwein (BB) like updating formula for $B_k$ in this section. This updating formula can ensure that $B_k$ is a diagonal matrix and thus very easy to be inverted.

The BB method has been studied extensively since it was firstly proposed in \cite{BB88}. BB method is a gradient method with certain properties of quasi-Newton method. At the $k$-th iteration, the step size $\alpha_k^{\mbox{\tiny BB}}$ for the gradient method is calculated via
\[
\alpha_k^{\mbox{\tiny BB}} := \mbox{arg}\min_{\alpha\in\R} \|\alpha s_k - y_k\|^2, \quad \mbox{ or } \quad \alpha_k^{\mbox{\tiny BB}} := \mbox{arg}\min_{\alpha\in\R} \|s_k - y_k/\alpha\|^2,
\]
where $s_k:=x_{k}-x_{k-1}$, $y_k:=\nabla f(x_k) - \nabla f(x_{k-1})$.
Direct calculations yield
\[\alpha_k^{\mbox{\tiny BB}}=\frac{s_k\tr y_k}{\|s_k\|^2}, \quad\mbox{ or }\quad \alpha_k^{\mbox{\tiny BB}}=\frac{\|y_k\|^2}{s_k\tr y_k}.\]
Many studies have shown the superiority of BB methods over the classical gradient descent method in both theory and practical computation. Readers are referred to \cite{HagerMairZhang09} for a relatively comprehensive discussion on BB methods. Besides, BB methods have been applied to solve many problems arising in real applications, such as image reconstruction \cite{wang-ma-bb-2007,NYFPZ14} and electronic structure calculation \cite{ZZWZ14}, and they have shown promising performance.
Recently, the nice numerical behavior of cyclic BB (CBB) methods attracts a lot of attentions (see, e.g., \cite{DHSZ06,HagerMairZhang09}). In CBB method, BB stepsize is used cyclicly, i.e., the stepsize in the $l$-th cycle is
\[
\alpha_{ql+i} = \alpha_{ql+1}^{\mbox{\tiny BB}},\quad i = 1,\ldots,q,
\]
where $q\ge1$ is the cycle length and $l=0,1,\ldots$.
In the setting of deterministic optimization, line search techniques are usually adopted in CBB to ensure the global convergence.
Although line search techniques are not applicable in stochastic optimization, we can still apply the idea of CBB to design an efficient algorithm that does not need to compute matrix inversion or solve linear equations in \eqref{x-k}. The details of our procedure to generate $B_k$ using stochastic CBB-like method are described as follows.

We set $B_k := \lambda_k^{-1} I$, and $\lambda_k$ is updated as in CBB method $\lambda_{ql+i} = \lambda_{ql+1}^{\tiny BB},i = 1,\ldots,q$, where $q$ is the cycle length and $l=0,1,\ldots$, and $\lambda_{ql+1}^{\tiny BB}$ is the optimal solution to
\be\label{ls}
\min_{\lambda\in\R}\quad  \|\lambda^{-1} s_{ql}-y_{ql}\|^2, \quad \mbox{ or } \quad \min_{\lambda\in\R}\quad  \| s_{ql}- \lambda y_{ql}\|^2,
\ee
where $s_k=x_{k+1}-x_{k}$ and the gradient difference $y_k$ is defined as
\be \label{y}
  y_k = \bar{G}_{k+1} - G_k = \frac{\sum_{i=1}^{m_{k}}G(x_{k+1},\xi_{k,i})}{m_{k}} - \frac{\sum_{i=1}^{m_{k}}G(x_{k},\xi_{k,i})}{m_{k}}.
\ee
Direct calculations yield that $\lambda_{ql+1}^{\tiny BB}=s_{ql}\tr y_{ql}/\|y_{ql}\|^2$ or $\lambda_{ql+1}^{\tiny BB}=\|s_{ql}\|^2/s_{ql}\tr y_{ql}$. However, $\lambda_{ql+1}^{\tiny BB}$ calculated in this way might be negative since $s_{ql}\tr y_{ql}<0$ might happen. Therefore, we must adapt the stepsize in order to preserve the positive definiteness of $B_k$. We thus propose the following strategy for calculating $\lambda_k$:
\be\label{lamb}
\lambda_{k+1} =
\begin{cases}
\lambda_k,\quad & \mbox{if $mod(k,q) \neq 0$,}\\
1, \quad & \mbox{if $mod(k,q) = 0$, $s_k\tr y_k\le 0$}\\
\mathrm{P}_{[\lambda_{\min},\lambda_{\max}]} \frac{s_{ql}\tr y_{ql}}{\|y_{ql}\|^2}\quad \mbox{or}\quad \mathrm{P}_{[\lambda_{\min},\lambda_{\max}]} \frac{\|s_{ql}\|^2}{s_{ql}\tr y_{ql}}, \quad & \mbox{if $mod(k,q) = 0$, $s_k\tr y_k > 0$},
\end{cases}
\ee
where $\mathrm{P}_{[\lambda_{\min},\lambda_{\max}]}$ denotes the projection onto the interval $[\lambda_{\min},\lambda_{\max}]$, where $\lambda_{\min}$ and $\lambda_{\max}$ are given parameters. Note that we actually switch to gradient descent method (by setting $\lambda_k=1$) if $s_k\tr y_k<0$. In our numerical tests later we will report the frequency of BB steps in this procedure. Notice that $B_k$ generated in this way satisfies the assumption {\bf AS.3} with
\[
m=\min\{\lambda_{\min},1\},\quad M=\max\{\lambda_{\max},1\},
\]
and in this case we can set $\zeta_{k}=0$ for all $k$ in \eqref{x-k}.

The stochastic CBB updating procedure for $B_{k+1}$ is summarized formally in Procedure \ref{SCBB}.

{\floatname{algorithm}{Procedure}
\begin{algorithm}[ht]
\caption{{\bf Stochastic Cyclic-BB-like update (SCBB) }}\label{SCBB}
\begin{algorithmic}[1]
\REQUIRE {Given $q\in\mathbb{N}_+$, $G_k,\lambda_{\min},\lambda_{\max}\in\R^n$ with $0<\lambda_{\min} < \lambda_{\max}$, $\xi_k$, $G_k$, $x_k$ and $x_{k+1}$.}
\ENSURE {$B_{k+1}$.}
  \IF {$mod(k,q)=0$}
  \STATE Calculate $s_k = x_{k+1}-x_k$ and
  \[y_k = \frac{\sum_{i=1}^{m_{k}}G(x_{k+1},\xi_{k,i})}{m_{k}} - G_k;\]
      \IF {$s_k\tr y_k > 0$}
      \STATE $\lambda_{k+1} = \mathrm{P}_{[\lambda_{\min},\lambda_{\max}]}\frac{s_k\tr y_k}{\|y_k\|^2}$ or $\mathrm{P}_{[\lambda_{\min},\lambda_{\max}]}\frac{\|s_k\|^2}{s_k\tr y_k}$;
      \ELSE
      \STATE $\lambda_{k+1} = 1$;
      \ENDIF
  \ELSE
  \STATE $\lambda_{k+1} = \lambda_k$;
  \ENDIF
  \STATE Set $B_{k+1} = \lambda_{k+1}^{-1}I$.
\end{algorithmic}
\end{algorithm}}

When Procedure \ref{SCBB} is used to generate $B_{k+1}$ in Step 4 of Algorithm \ref{finit-sso-uncons}, we have the following complexity result on $\SFO$-calls.

\begin{thm}
Denote $N_{sfo}$ as the total number of $\SFO$-calls in Algorithm \ref{finit-sso-uncons} with Procedure \ref{SCBB} called to generate $B_{k+1}$ at each iteration. Under the same conditions as Corollary \ref{cor3.4}, to achieve $\E[\|\nabla f(x_R)\|^2]\le \epsilon$, $N_{sfo}$ is at most $\lceil (1+q)\bar{N}/q \rceil$ where $\bar{N}$ satisfies \eqref{bar-N}, i.e., $N_{sfo}$ is in the order of $O(\epsilon^{-2})$.
\end{thm}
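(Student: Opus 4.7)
The plan is essentially a bookkeeping argument that mirrors the analysis already carried out for the damped-BFGS variant, with the one delicate point being a careful accounting of the extra stochastic gradient evaluations triggered by Procedure \ref{SCBB}.

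First, I would verify that Procedure \ref{SCBB} delivers a Hessian approximation sequence $\{B_k\}$ meeting the hypotheses of the complexity machinery from Section \ref{sec:rssa}. Since $B_k = \lambda_k^{-1} I$ with $\lambda_{k+1}$ obtained either by copying $\lambda_k$, resetting to $1$, or projecting onto $[\lambda_{\min},\lambda_{\max}]$, an easy induction from a suitable initial value $\lambda_1$ shows that $\lambda_k \in [\min\{\lambda_{\min},1\},\max\{\lambda_{\max},1\}]$ for every $k$. Combined with the choice $\zeta_k\equiv 0$ suggested at the end of Section \ref{sec:SBB}, this yields {\bf AS.3} with $m=\min\{\lambda_{\min},1\}$ and $M=\max\{\lambda_{\max},1\}$. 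For {\bf AS.4}, observe that $\lambda_{k+1}$ is computed from $s_k$, $G_k$ and $\bar{G}_{k+1}$, all of which are measurable with respect to $\xi_{[k]}$, so $B_{k+1}$ depends only on $\xi_{[k]}$.

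With {\bf AS.3} and {\bf AS.4} in place, Corollary \ref{cor3.4} applies directly: provided the number $\bar{N}$ of $\SFO$-calls devoted to forming the batch gradients $G_k$ across all iterations satisfies \eqref{bar-N}, we obtain $\E[\|\nabla f(x_R)\|^2]\le\epsilon$. Writing $N = \lceil \bar{N}/\bar{m}\rceil$ for the total number of outer iterations (with $\bar{m}$ from \eqref{batch-size-m}), the remaining task is to count the supplementary $\SFO$-calls incurred by Procedure \ref{SCBB}.

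Here is the key observation, which is also the only mildly subtle step in the argument: Procedure \ref{SCBB} triggers the auxiliary batch evaluation $\sum_{i=1}^{m_k} G(x_{k+1},\xi_{k,i})$ required to form $y_k$ only on the ``refresh'' iterations where $\mod(k,q)=0$; on all other iterations the procedure returns immediately with $\lambda_{k+1}=\lambda_k$, without any further oracle query. The number of refresh iterations in $\{1,\ldots,N\}$ is at most $\lfloor N/q \rfloor$, each contributing $\bar{m}$ extra $\SFO$-calls. Combining this with the $N\bar{m}$ calls spent on the $G_k$'s themselves yields
\begin{equation*}
N_{sfo} \;\le\; N\bar{m} + \lfloor N/q\rfloor\,\bar{m} \;\le\; N\bar{m}\Bigl(1+\tfrac{1}{q}\Bigr) \;=\; \frac{(q+1)\bar{N}}{q},
\end{equation*}
and since $N_{sfo}$ is a nonnegative integer we may replace the right-hand side by $\lceil (1+q)\bar{N}/q\rceil$. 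Because $\bar{N}=O(\epsilon^{-2})$ by Corollary \ref{cor3.4} and $q$ is a fixed constant, $N_{sfo}=O(\epsilon^{-2})$ as claimed. The main (minor) pitfall I would flag is the bound on $\lambda_k$ used to verify {\bf AS.3}: although the projected BB stepsize lies in $[\lambda_{\min},\lambda_{\max}]$, the fallback value of $1$ used when $s_k\tr y_k\le 0$ may fall outside this interval, which is precisely why the uniform bounds must be stated as $\min\{\lambda_{\min},1\}$ and $\max\{\lambda_{\max},1\}$ rather than $\lambda_{\min}$ and $\lambda_{\max}$.
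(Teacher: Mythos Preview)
Your proposal is correct and follows essentially the same counting argument as the paper: group the iterations into blocks of length $q$, observe that each such block costs $(q+1)\bar{m}$ oracle calls (the $q\bar{m}$ calls for the $G_k$'s plus one extra batch of $\bar{m}$ calls for the refresh of $y_k$), and rescale by the Step-2 budget $\bar{N}$. The paper's own proof is terser and does not re-derive {\bf AS.3}--{\bf AS.4} inside the proof, since those facts are established in the discussion preceding the theorem; your inclusion of them is harmless but redundant. One cosmetic point: your equality $N\bar{m}(1+1/q)=(q+1)\bar{N}/q$ tacitly uses $N\bar{m}=\bar{N}$, whereas strictly $N=\lceil\bar{N}/\bar{m}\rceil$; this is the same rounding slack already present in the paper and does not affect the $O(\epsilon^{-2})$ conclusion.
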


\begin{proof}
Under the same conditions as Corollary \ref{cor3.4}, the batch size $m_k=\bar{m}$ for any $k$. If Procedure \ref{SCBB} is called at each iteration of Algorithm \ref{finit-sso-uncons}, then in every $q$ iterations, $\bar{m}(q+1)$ $\SFO$-calls are needed. Since to achieve $\E[\|\nabla f(x_R)\|^2]\le \epsilon$ the number $\SFO$ calls in Step 2 of Algorithm \ref{finit-sso-uncons} is at most $\bar{N}$, the total number of $\SFO$-calls in Algorithm \ref{finit-sso-uncons} is at most $\lceil (1+q)\bar{N}/q \rceil$.
\end{proof}

\section{Numerical Experiments}\label{sec:num}

In this section, we conduct numerical experiments to test the practical performance of the proposed algorithms.

By combining Algorithms \ref{sso-uncons} and \ref{finit-sso-uncons} with Procedures \ref{DBFGS} and \ref{SCBB}, we get the following four algorithms: SDBFGS (Algorithm \ref{sso-uncons} with Procedure \ref{DBFGS}), SCBB (Algorithm \ref{sso-uncons} with Procedure \ref{SCBB}), RSDBFGS (Algorithm \ref{finit-sso-uncons} with Procedure \ref{DBFGS}), and RSCBB (Algorithm \ref{finit-sso-uncons} with Procedure \ref{SCBB}). We compare them with three existing methods for solving \eqref{orig-prob}: SGD, RSG \cite{gl13} and RES \cite{mr10}.

Since the course of these algorithms is a stochastic process, we run each instance $N_{run}$ times and report the performance in average. In particular, we report the number of $\SFO$-calls ($N_{sfo}$), the CPU time (in seconds), and the mean and variance (var.) of {$\|\nabla f(x_k^*)\|$ (or $\|\nabla f(x_k^*)\|^2$) over $N_{run}$ runs, where $x_k^*$ is the output of the tested algorithm at $k$-th run with $k=1,\ldots,N_{run}$.

All the algorithms are implemented in Matlab R2013a on a PC with a 2.60 GHz Intel microprocessor and 8GB of memory.

\subsection{A convex stochastic optimization problem}\label{sec:num-strong}

We first consider a convex stochastic optimization problem, which is also considered in \cite{mr10}:
\be \label{prob_1}
\min_{x\in \R^n}\quad f(x)=\E_{\xi}[f(x,\xi)]:= \E [\frac{1}{2}x\tr (A+A\mbox{diag}(\xi))x - b\tr x ],
\ee
where $\xi$ is uniformly drawn from $\Xi:=[-0.1,0.1]^n$, $b$ is chosen uniformly randomly from $[0,1]^n$, and $A$ is a diagonal matrix whose diagonal elements are uniformly chosen from a discrete set $\C{S}$ which will be specified later. We can control the condition number of \eqref{prob_1} through the choice of $\C{S}$ and we will explore the performances of algorithms under different condition numbers.

For \eqref{prob_1}, we compare SDBFGS and SCBB with SGD and RES. For SGD, we tested two different choices of stepsize, i.e., $\alpha_k=10^2/(10^3+k)$ and $10^4/(10^4+k)$. We also tested some other choices for $\alpha_k$, but the performance with these two are  relatively better.
The parameters for the other three algorithms are set as follows:
\begin{equation*}
\begin{aligned}
\mbox{SCBB:}\quad &\alpha_k = \frac{10^2}{10^3 + k},\quad \lambda_{\min} = 10^{-6},\quad \lambda_{\max} = 10^8, \quad q=5, \\
\mbox{SDBFGS:}\quad & \alpha_k =\frac{10^2}{10^3+k},\quad \zeta_k=10^{-4},\quad \delta=10^{-3}, \\
\mbox{RES:} \quad & \alpha_k=\frac{10^2}{10^3+k}, \quad \Gamma=10^{-4}, \quad \hat{\delta}=10^{-3}.
\end{aligned}
\end{equation*}
Note that the parameter settings for RES are the same as the ones used in \cite{mr10}. To make a fair comparison with RES, we thus adopted the same stepsize in these three algorithms above.

Since the solution of \eqref{prob_1}
is $x^* = A^{-1}b$ if the random perturbation is ignored, we terminate the algorithms when
\[
\frac{\|x_k-x^*\|}{\max\{1,\|x^*\|\}}\le \rho,
\]
where $\rho>0$ is a given tolerance. We chose $\rho=0.01$ in our experiments. We set the batch size $m_k=5$ for all the tested algorithms. Besides, for each instance the maximum iteration number is set as $10^4$. The results for different dimension $n$ and set $\C{S}$ are reported in Table \ref{table1}. Note that different choices of $\C{S}$ can reflect different condition numbers of \eqref{prob_1}.

\begin{table}
\caption{Results for solving \eqref{prob_1}. Mean value and variance (var.) of $\{\|\nabla f(x_k^*)\|:k=1,\ldots,N_{run}\}$ with $N_{run}=20$ are reported. ``---'' means that the algorithm is divergent.}
\label{table1}
\begin{center} \footnotesize
\begin{tabular}{|c|c|c|c|c|c|c|}
  \hline

 $n$ &   & SGD & SGD &  RES & SDBFGS& SCBB  \\
     &   &   $\alpha_k=\frac{10^2}{10^3+k} $  & $\alpha_k=\frac{10^4}{10^4+k} $ &  $\alpha_k=\frac{10^2}{10^3+k} $ & $\alpha_k=\frac{10^2}{10^3+k} $ & $\alpha_k=\frac{10^2}{10^3+k} $   \\
 \hline
  &  & \multicolumn{5}{c|}{$\C{S}=\{0.1,1\}$}  \\
  \hline
 \multirow{4}*{500}& $N_{sfo}$ &2.921e+03 & 2.400e+02 & 5.035e+02 &  5.025e+02  & 7.653e+02\\
& mean & 9.781e-02 & 2.446e-01 & 9.933e-02 & 1.002e-01  & 1.123e-01 \\
& var. & 7.046e-07 & 1.639e-04 & 4.267e-06 & 7.329e-06 &  6.020e-06 \\
& CPU & 2.848e-01 & 2.580e-02 & 9.607e-01 &  6.273e-01 &  3.095e-02  \\
  \hline
  \multirow{4}*{1000} & $N_{sfo}$ & 2.925e+03 & 2.380e+02 & 5.015e+02  & 5.000e+02 &   7.243e+02 \\
& mean & 1.453e-01 & 3.532e-01 & 1.476e-01  & 1.474e-01  & 1.667e-01 \\
& var. & 1.101e-06 & 1.368e-04 & 1.117e-05 &  8.251e-06 &  9.984e-06  \\
& CPU & 1.172e+00 & 8.665e-02 & 6.031e+00 &  6.109e+00  & 2.924e-01  \\
\hline
  \multirow{4}*{5000} & $N_{sfo}$ & 2.924e+03 & 2.400e+02 & 5.045e+02 & 5.045e+02  & 7.575e+02\\
& mean & 3.165e-01 & 7.982e-01 & 3.194e-01 &  3.180e-01 &  3.624e-01  \\
& var. & 1.255e-06 & 2.050e-04 & 1.336e-05 & 1.246e-05 & 8.917e-06  \\
& CPU & 1.270e+01 & 7.661e-01 & 3.492e+02 & 3.577e+02 & 2.371e+00  \\
  \hline
& & \multicolumn{5}{c|}{ $\C{S}=\{0.1,1,10\}$ } \\
\hline
\multirow{4}*{500} & $N_{sfo}$ & 2.927e+03 & 5.000e+04 & 2.865e+02 &   2.875e+02  &  8.315e+03  \\
& mean & 1.622e-01 & --- & 6.016e-01 &   5.698e-01 &   9.429e-02  \\
& var. & 5.421e-05 & --- & 1.162e-02 &   9.589e-03 &   3.281e-06  \\
& CPU & 3.041e-01 & 4.842e+00 & 5.132e-01 &   3.683e-01 &   7.994e-01  \\
\hline
\multirow{4}*{1000} & $N_{sfo}$ & 2.928e+03 & 5.000e+04 & 2.875e+02 &   2.880e+02 &   7.101e+03  \\
& mean & 2.137e-01 & --- & 7.707e-01 &   7.791e-01 &   1.372e-01  \\
& var. & 4.638e-05 & --- & 7.222e-03 &   6.860e-03 &   2.834e-06  \\
& CPU & 9.459e-01 & 1.934e+01 & 3.354e+00 &   3.595e+00 &   2.855e+00  \\
\hline
\multirow{4}*{5000} & $N_{sfo}$ & 2.925e+03 & 5.000e+04 & 2.865e+02 &   2.865e+02 &   8.035e+03  \\
& mean & 4.911e-01 & --- & 1.957e+00 &   1.956e+00 &   2.903e-01  \\
& var. & 6.575e-05 & --- & 3.916e-02 &   4.517e-02 &   5.618e-06  \\
& CPU & 1.564e+01 & 1.525e+02 & 2.023e+02 &   2.039e+02 &   2.254e+01  \\
\hline
&&\multicolumn{5}{c|}{ $\C{S}=\{0.1,1,10,100\}$  } \\
\hline%
\multirow{4}*{500} & $N_{sfo}$ & 5.000e+04 & 5.000e+04 & 6.279e+03 &  6.409e+03 & 4.953e+04  \\
& mean & --- & --- & 3.193e-01 &  3.479e-01 &  2.049e-01  \\
& var. & --- & --- & 6.003e-02 &   6.754e-02 &   9.889e-05  \\
& CPU & 3.136e+00 & 3.517e+00 &  8.458e+00   & 9.817e+00 &   3.955e+00  \\
\hline
\multirow{4}*{1000} & $N_{sfo}$ & 5.000e+04 & 5.000e+04 & 9.028e+03  &  9.016e+03 &   5.644e+04   \\
& mean & --- & --- & 5.615e-01 &   5.005e-01 &   2.397e-01  \\
& var. & --- & --- & 6.704e-02 &   8.857e-02 &   2.132e-04  \\
& CPU &1.774e+01 & 1.203e+01 &  1.251e+02 &   1.267e+02 &   1.169e+01  \\
\hline
\multirow{4}*{5000} & $N_{sfo}$ & 5.000e+04 & 5.000e+04 & 6.756e+03 &   6.694e+03 &   6.000e+04  \\
& mean & --- & --- & 9.388e+00 &   1.104e+01 &   1.118e+00  \\
& var. & --- & --- & 4.133e+01 &   5.345e+01 &   2.581e-04  \\
& CPU &1.534e+02 & 3.041e+03 & 3.022e+03 &   5.820e+03 &   1.278e+02  \\
\hline
\end{tabular}
\end{center}
\end{table}

From Table \ref{table1} we see that the performance of SGD is poor compared with the other three methods. The average number of $\SFO$-calls of SGD is significantly larger than the ones given by RES, SDBFGS and SCBB. Moreover, SGD diverges if the stepsize $\alpha_k$ is too large or the condition number of the problem increases. It is also noticed that the performance of RES and SDBFGS is comparable. Furthermore, SCBB seems to be the best among the tested algorithms in terms of mean and variance of $\|\nabla f(x_k^*)\|$ as well as the CPU time, although RES and SDBFGS need less number of $\SFO$-calls.

\subsection{A nonconvex support vector machine problem}

In this section, we compare RSDBFGS and RSCBB with RSG studied in \cite{gl13} for solving the following nonconvex support vector machine problem with a sigmoid loss function (see \cite{Mason-nips-1999})
\be\label{prob_SVM}
\min_{x\in\R^n}\quad f(x):=\E_{u,v}[1-\mathrm{tanh}(v\langle x,u\rangle)] + \lambda \|x\|_2^2
\ee
where $\lambda>0$ is a regularization parameter, $u\in\R^n$ denotes the feature vector, $v\in\{-1,1\}$ refers to the corresponding label and $(u,v)$ is drawn from the uniform distribution on $[0,1]^n\times \{-1,1\}$. Note that we do not compare with RES here because RES is designed for solving strongly convex problems. In order to compare with RSG, we adopt the same experimental settings as in \cite{gl13-ex}. The regularization parameter $\lambda$ is set as 0.01. The initial point is set as $x_1=5*\bar{x}_1$, where $\bar{x}_1$ is drawn from the uniform distribution over $[0,1]^n$. At the $k$-th iteration, to compute the stochastic gradient at iterate $x_k$, a sparse vector $u_k$ with 5\% nonzero components is first generated following the uniform distribution on $[0,1]^n$, and then $v_k$ is computed through $v_k=\mbox{sign}(\langle \bar{x},u_k \rangle)$ for some $\bar{x}\in\R^n$ drawn from uniform distribution on $[-1,1]^n$. Note that here the batch size $m_k$ is equal to 1.

The code of RSG was downloaded from http://www.ise.ufl.edu/glan/computer-codes. In order to make a fair comparison, we generate our codes RSDBFGS and RSCBB by replacing the update formula \eqref{sgd} in RSG by SDBFGS and SCBB procedures. In both RSDBFGS and RSCBB, we adopt the same stepsize as in RSG. Note that an auxiliary routine is implemented to estimate the Lipshitz constant in \cite{gl13-ex}. The cycle length in SCBB is set as $q=5$. We test the three algorithms with different problem sizes $n=500$, $1000$ and $2000$ and different number of $\SFO$-calls $N_{sfo}=2500$, $5000$, $10000$ and $20000$. Recall that the theoretical performance of expectation of squared norm of gradient at returned point has been analyzed in Section \ref{sec:rssa}. We next report the mean value and variance of $\|\nabla f(x_k^*)\|^2$ over $N_{run}=20$ runs of each algorithm solving \eqref{prob_SVM} in Table \eqref{tab_SVM_1}. To evaluate the quality of $x_k^*$ in terms of classification, we also report the misclassification error on a testing set $\{(u_i,v_i):i=1,\ldots,K\}$, which is defined as
\[
\mbox{err}(x_k^*):= \frac{|\{i:v_i\neq \mbox{sign}(\langle x_k^*, u_i \rangle), i=1,\ldots, K\}|}{K},
\]
and the sample size $K=75000$. Here, the testing set is generated in the same way as we have introduced in previous paragraph.

\begin{table}
\caption{Results of RSG, RSDBFGS and RSCBB for solving \eqref{prob_SVM}. Mean value and variance (var.) of $\{\|\nabla f(x_k^*)\|^2: k=1,\ldots,N_{run}\}$ and average classification error (err.) with $N_{run}=20$ are reported.}\label{tab_SVM_1}
\begin{center} \footnotesize
\begin{tabular}{|c|c|c|c|c|}
  \hline


 $N_{sfo}$ &   & RSG & RSDBFGS & RSCBB \\
 \hline
\multicolumn{5}{|c|}{$n=500$} \\
  \hline
 \multirow{4}*{2500} & mean & 3.622e-01& 1.510e-02  & 3.021e-02\\
                     & var. & 1.590e-01& 3.041e-05 & 4.006e-04\\
                     & err.(\%)  & 49.13 & 33.34 & 40.09\\
                     & CPU & 3.794e+00  & 1.799e+01 & 4.136e+00\\
 \hline
 \multirow{4}*{5000} & mean & 2.535e-01& 1.441e-02 & 2.146e-02\\
                     & var.  &1.320e-01 & 3.887e-05 & 3.464e-04\\
                     & err.(\%)  &45.47 & 31.09 & 36.37\\
                     & CPU & 5.100e+00  & 3.452e+01 &  5.433e+00\\
 \hline
 \multirow{4}*{10000} & mean &  3.201e-01 &  1.033e-02   &  6.277e-02 \\
                     & var.  &  3.052e-01 & 1.770e-05  & 1.692e-02 \\
                     & err.(\%)  &  38.12 & 25.19  & 35.99 \\
                     & CPU & 7.127e+00  & 6.759e+01 & 6.969e+00 \\
 \hline
 \multirow{4}*{20000} & mean & 2.733e-01 & 1.117e-02  &  4.710e-02  \\
                      & var.  & 2.923e-01 & 8.284e-05  & 6.327e-03 \\
                      & err.(\%)  & 30.81 & 24.59 &  31.60\\
                      & CPU & 1.179e+01 & 1.227e+02 & 1.201e+01 \\
 \hline
\multicolumn{5}{|c|}{$n=1000$} \\
\hline
 \multirow{4}*{2500} & mean & 8.055e-01 & 1.966e-02  & 2.632e-02\\
                     & var.  & 4.250e-01  & 2.824e-05 & 1.087e-03\\
                     & err.(\%)  & 48.39 & 37.07 & 45.32\\
                     & CPU & 6.810e+00  & 8.172e+01 & 6.607e+00\\
 \hline
 \multirow{4}*{5000} & mean & 5.188e-01 & 1.823e-02 & 3.088e-02\\
                     & var.  &3.595e-01 & 1.717e-05  &  2.115e-04\\
                     & err.(\%)  &47.73 & 33.87 & 41.86\\
                     & CPU & 8.059e+00  & 1.586e+02 & 8.296e+00\\
  \hline
 \multirow{4}*{10000} & mean & 5.819e-01 & 1.744e-02  &  4.359e-02  \\
                     & var.  & 8.432e-01 &  6.141e-05  & 1.543e-03\\
                     & err.(\%)  &  45.44 & 30.61  & 40.07\\
                     & CPU & 1.026e+01  &  3.890e+02 & 1.032e+01 \\
 \hline
  \multirow{4}*{20000} & mean & 4.457e-01 & 1.542e-02  & 5.062e-02 \\
                      & var.  & 6.893e-01 & 3.920e-05  & 3.031e-03 \\
                      & err.(\%)  & 37.42 & 27.43  & 37.50 \\
                      & CPU & 1.841e+01 & 5.717e+02 & 1.673e+01 \\
 \hline
\multicolumn{5}{|c|}{$n=2000$} \\
\hline
 \multirow{4}*{2500} & mean & 2.731e+00 & 2.456e-02 & 5.366e-02\\
                     & var.  & 3.654e+00 & 1.019e-04 & 6.669e-03\\
                     & err.(\%)  &48.80 & 42.64 & 46.00\\
                     & CPU &  1.219e+01  &4.025e+02 & 1.246e+01\\
 \hline
 \multirow{4}*{5000} & mean & 2.153e+00 & 2.076e-02 & 4.303e-02\\
                     & var.  & 3.799e+00 & 1.353e-04 & 1.211e-03\\
                     & err.(\%)  &48.54 & 40.97 & 44.43\\
                     & CPU & 1.332e+01 & 1.048e+03 &  1.312e+01\\
  \hline
 \multirow{4}*{10000} & mean & 8.216e-01 & 2.546e-02  & 3.756e-02  \\
                     & var.  & 7.965e-01 & 1.084e-04 & 7.044e-04\\
                     & err.(\%)  &  46.98 & 38.73  & 42.50 \\
                     & CPU & 1.840e+01  & 1.578e+03  & 1.716e+01 \\
 \hline
  \multirow{4}*{20000} & mean & 4.570e-01  & 2.199e-02 & 5.265e-02 \\
                      & var.  & 3.782e-01  &  1.954e-04 & 2.820e-03 \\
                      & err.(\%) & 44.00  & 33.81 & 40.94 \\
                      & CPU & 2.399e+01  & 3.215e+03 & 2.448e+01 \\
 \hline

\end{tabular}
\end{center}
\end{table}

From Table \ref{tab_SVM_1} we have the following observations. First, both RSDBFGS and RSCBB outperform RSG in terms of mean value and variance of $\|\nabla f(x_k^*)\|^2$, and in all cases RSDBFGS is the best. Second, both RSDBFGS and RSCBB outperform RSG in most cases in terms of misclassification error, and RSDBFGS is always the best. Third, RSDBFGS consumes most CPU time and RSG and RSCBB are comparable in terms of CPU time. Fourth, for fixed $n$, the misclassification error decreases when $N_{sfo}$ increases.

Finally, we conduct some further tests to study the behavior of RSCBB. Note that in Procedure \ref{SCBB} we need to switch to a gradient step whenever $s_k\tr y_k<0$ happens. So it is important to learn how often this happens in the course of the algorithm. In Table \ref{tab_per_BB_SVM_1} we report the percentage of BB steps in RSCBB for solving \eqref{prob_SVM}. We can see from Table \ref{tab_per_BB_SVM_1} that for fixed $n$, the percentage of BB steps monotonically decreases when $N_{sfo}$ increases. Nonetheless, as we observed from the previous numerical tests, using BB steps helps significantly in improving the efficiency and accuracy of the algorithm.

\begin{table}
\caption{Percentage of BB steps in RSCBB for solving \eqref{prob_SVM}}\label{tab_per_BB_SVM_1}
\begin{center} \footnotesize
\begin{tabular}{|c|c|c|c|c|c|c|c|c|c|c|c|c|}
  \hline
$n$ & \multicolumn{4}{c|}{$500$} & \multicolumn{4}{c}{$1000$} & \multicolumn{4}{|c|}{$2000$} \\
\hline
$N_{sfo}$ & 2500 & 5000 & 10000 & 20000 & 2500 & 5000 & 10000 & 20000 & 2500 & 5000 & 10000 & 20000 \\
\hline
Per.(\%) & 66.15 & 54.80 & 53.21 & 53.15 & 56.70 & 48.45 & 46.33 & 40.66 & 57.12 & 45.49 & 32.65 & 30.04 \\
\hline
\end{tabular}
\end{center}
\end{table}

\section{Conclusions and remarks}\label{sec:conclusions}

In this paper we proposed two classes of stochastic quasi-Newton methods for nonconvex stochastic optimization. We first proposed a general framework of stochastic quasi-Newton methods, and analyzed its theoretical convergence in expectation. We further proposed a general framework of randomized stochastic quasi-Newton methods and established its worst-case $\SFO$-calls complexity. This kind of methods do not require the stepsize to converge to zero and provide an explicit worst-case $\SFO$-calls complexity bound. To create positive definite Hessian approximations that satisfy the assumptions required in the convergence and complexity analysis, we proposed two specific stochastic quasi-Newton update strategies, namely SDBFGS and SCBB strategies. We also studied their worst-case $\SFO$-calls complexities in the corresponding stochastic quasi-Newton algorithms. Finally, we reported some numerical experimental results that demonstrate the efficiency of our proposed algorithms. The numerical results indicate that the proposed SDBFGS and SCBB are preferable compared with some existing methods such as SGD, RSG and RES. We also noticed that the phenomenon shown in Table \ref{tab_per_BB_SVM_1} deserves a further investigation to better understand the behavior of BB steps in designing stochastic quasi-Newton methods, and we leave this as a future work.


\bibliographystyle{plain}

\bibliography{All}

\end{document}